\documentclass[11pt, reqno]{amsart}
\usepackage{indentfirst, amssymb, amsmath, amsthm, mathrsfs, setspace, indentfirst, enumerate,  mathrsfs, amsmath, amsthm}
\usepackage[bookmarksnumbered, colorlinks, plainpages]{hyperref}
\usepackage{mathrsfs}
\usepackage{cite}
\usepackage{graphicx}
\usepackage{float}
\newtheorem*{theo4.A}{Theorem 4.A}
\newtheorem*{theo4.B}{Theorem 4.B}

\newtheorem*{theoA}{Theorem A}
\newtheorem*{theoB}{Theorem B}
\newtheorem*{theoC}{Theorem C}
\newtheorem*{theoD}{Theorem D}

\newtheorem*{theo2.E}{Theorem 2.E}
\newtheorem*{theo2.F}{Theorem 2.F}

\newtheorem*{theo3.A}{Theorem 3.A}
\newtheorem*{theo3.B}{Theorem 3.B}
\newtheorem*{ques3.A}{Question 3.A}

\newtheorem*{cor A}{Corollary A}
\newtheorem*{cor B}{Corollary B}
\newtheorem*{que1}{Question 1.1}
\newtheorem*{que2}{Question 1.2}

\newtheorem{theo}{Theorem}[section]
\newtheorem{lem}{Lemma}[section]

\newtheorem{defi}{Definition}[section]
\newtheorem{rem}{Remark}[section]

\newcommand{\ol}{\overline}
\newcommand{\be}{\begin{equation}}
\newcommand{\ee}{\end{equation}}
\newcommand{\beas}{\begin{eqnarray*}}
\newcommand{\eeas}{\end{eqnarray*}}
\newcommand{\bea}{\begin{eqnarray}}
\newcommand{\eea}{\end{eqnarray}}

\numberwithin{equation}{section}
\begin{document}
\title[I\MakeLowercase{mproved} B\MakeLowercase{ohr Inequalities Involving} F\MakeLowercase{r\'{e}chet Derivatives Associated with} H\MakeLowercase{olomorphic Mappings in} B\MakeLowercase{anach Spaces}
]{ I\MakeLowercase{mproved} B\MakeLowercase{ohr Inequalities Involving} F\MakeLowercase{r\'{e}chet Derivatives Associated with} H\MakeLowercase{olomorphic Mappings in} B\MakeLowercase{anach Spaces}}

\date{}
\author[N. S\MakeLowercase{arkar and} P. D\MakeLowercase{as}]{N\MakeLowercase{abadwip} S\MakeLowercase{arkar and} P\MakeLowercase{radip} D\MakeLowercase{as}}

\address{Amity School of Applied Sciences, Amity University Mumbai, Panvel, Navi Mumbai, Maharashtra-410206, India.}
\email{nsarkar@mum.amity.edu, nabadwipsarkar52@gmail.com}
\address{Department of Mathematics, Raiganj University, Raiganj, West Bengal-733134, India.}
\email{pradipsmath@gmail.com}

\renewcommand{\thefootnote}{}
\footnote{2020 \emph{Mathematics Subject Classification}: 32A05, 32A10, 32K05, 32M15.}
\footnote{\emph{Key words and phrases}:Bohr radius, Holomorphic mappings, Fr\'{e}chet derivatives, Homogeneous polynomial expansion,
Complex Banach space.}
\footnote{*\emph{Corresponding Author}: Pradip Das}
\renewcommand{\thefootnote}{\arabic{footnote}}
\setcounter{footnote}{0}

\begin{abstract}
Motivated by recent advances in derivative Bohr inequalities and their refinements, we investigate the Bohr phenomenon for holomorphic mappings in complex Banach spaces associated with Schwarz functions. We establish sharp Bohr-type inequalities involving higher-order Fr\'{e}chet derivatives for both $|F(z)|$ and its refined counterpart $|F(z)|^2$. The corresponding Bohr radii are determined and shown to be best possible. Our results provide affirmative answers to Questions 1.1 and 1.2 and extend several classical and recent Bohr inequalities from the unit disk to the setting of holomorphic mappings on Banach spaces.
\end{abstract}

\thanks{Typeset by \AmS -\LaTeX}
\maketitle
\section{{\bf Introduction}}

Let $H^\infty$ denote the Banach space of all bounded analytic functions on the unit disk
$\mathbb{U}=\{z\in\mathbb{C}:|z|<1\},$
equipped with the norm $\|f\|_\infty=\sup_{z\in\mathbb{D}}|f(z)|.$

A classical theorem of Bohr \cite{Bohr1914} states that if $f(z)=\sum_{n=0}^{\infty}a_n z^n \in H^\infty,$
then
\[
B_0(f,r):=|a_0|+\sum_{n=1}^{\infty}|a_n|r^n
\leq \|f\|_\infty
\]
for all $r\leq 1/6$. Shortly thereafter, Riesz, Schur, and Wiener independently improved the radius to $1/3$, proving that
\[
|a_0|+\sum_{n=1}^{\infty}|a_n|r^n \leq \|f\|_\infty
\]
whenever $r\leq 1/3$, and that the constant $1/3$ is sharp. This result is now known as the classical Bohr theorem, while the number $1/3$ is referred to as the \emph{Bohr radius}. The sharpness follows from the family of disk automorphisms
\[
\varphi_a(z)=\frac{a-z}{1-az}, \qquad 0\le a<1.
\]

Although Bohr's original paper already contained the essential ideas leading to the sharp radius, a variety of alternative proofs and extensions have subsequently appeared. Comprehensive surveys of the classical theory and its developments can be found in \cite{AAP2017,GMR2018}. It is worth noting that no extremal function in $H^\infty$ attains the Bohr radius exactly; rather, the radius $1/3$ arises as a limiting value of suitable extremal families (see \cite{AKP2019,GMR2018,KP2017}).

During the past two decades, Bohr's phenomenon has attracted considerable attention and has been investigated in a wide range of settings, including multidimensional complex analysis, harmonic and quasi-conformal mappings, operator-valued functions, Dirichlet series, and various subclasses of analytic functions. Numerous refinements and generalizations have been established; see, for example, \cite{AA2013,BD2018,BB2004,KS2022} and the references therein.


In order to present our results in a clear and systematic manner, we first fix some
notation. For a point $z=(z_1,z_2,\ldots,z_n)\in\mathbb{C}^n$, we denote by $\|z\|_\infty:=\max_{1\le i\le n}|z_i|$
the supremum norm. The unit polydisc in $\mathbb{C}^n$ is then defined by
$\mathbb{U}^n:=\{z\in\mathbb{C}^n:\ \|z\|_\infty<1\}.$\par
Let $X$ and $Y$ be complex Banach spaces.
\begin{defi}
Let $k\in\mathbb{N}$. A mapping $P\colon X\to Y$ is called a \emph{homogeneous polynomial of degree $k$}
if there exists a $k$-linear mapping $u\colon X^k\to Y$ such that
\[
P(x)=u(x,\ldots,x), \qquad x\in X.
\]
\end{defi}

Note that if $P$ is a homogeneous polynomial of degree $k$, then
\[
P(\lambda x)=\lambda^k P(x), \qquad x\in X,\ \lambda\in\mathbb{C}.
\]
Throughout this paper, the degree of a homogeneous polynomial is indicated by a subscript.
That is, if $P_k$ is a homogeneous polynomial, then its degree is $k$.
Moreover, if $P_k$ is a $k$-homogeneous polynomial from $X$ into $Y$, then there exists a unique
symmetric $k$-linear mapping $u$ such that
\[
P_k(x)=u(x,\ldots,x), \qquad x\in X.
\]

Let $D\subset X$ be a domain and let $F\colon D\to Y$ be a holomorphic mapping.
For $z\in D$, denote by $D^kF(z)$ the $k$-th Fr\'{e}chet derivative of $F$ at $z$.
If $D$ contains the origin, then $F$ admits the expansion
\begin{equation}
F(z)=\sum_{k=0}^{\infty}\frac{1}{k!}\,D^kF(0)(z^k)
\label{eq:Taylor}
\end{equation}
in a neighbourhood of the origin.
Since $\frac{1}{k!}D^kF(0)(z^k)$ is a homogeneous polynomial of degree $k$, we shall use the notation
\[
P_k(z):=\frac{1}{k!}D^kF(0)(z^k)
\]
throughout this paper.
If $D$ is a bounded balanced domain in $X$ and $F(D)$ is bounded, then the series
\eqref{eq:Taylor} converges uniformly on $rD$ for each $r\in(0,1)$.

Let $F\colon D\to Y$ be holomorphic. For $k\in\mathbb{N}$, we say that $z=0$ is a zero of order $k$ of $F$
if
\[
F(0)=0,\quad DF(0)=0,\ \ldots,\ D^{k-1}F(0)=0,\quad \text{but } D^kF(0)\neq 0.
\]

Let $B_X$ and $B_Y$ denote the open unit balls of the Banach spaces $X$ and $Y$, respectively.
A holomorphic mapping $\mu\colon B_X\to B_Y$ with $\mu(0)=0$ is called a \emph{Schwarz mapping}.
If $\mu_k$ is a Schwarz mapping such that $z=0$ is a zero of order $k$ of $\mu$, then the following estimate holds
(see, for example, \cite[Lemma~6.1.28]{GK2003}):
\begin{equation}
\|\mu_k(z)\|_Y \le \|z\|_X^{\,k}, \qquad z\in B_X.
\label{eq:Schwarz}
\end{equation}

Let $L(X,\mathbb{C})$ denote the space of continuous linear operators from $X$ into $\mathbb{C}$.
For each $x\in X\setminus\{0\}$, define
\[
T(x):=\{\,\ell_x\in L(X,\mathbb{C}) : \ell_x(x)=\|x\|,\ \|\ell_x\|=1\,\}.
\]
By the Hahn--Banach theorem, the set $T(x)$ is nonempty.

The classical Bohr phenomenon has been extensively investigated for various classes of analytic functions and in different functional settings. In particular, a natural question is whether the Bohr inequality involving the term $|f(z)|$ can be refined by replacing it with the stronger quantity $|f(z)|^2$. Addressing this problem, Liu et. al. \cite{LSX2018} established sharp Bohr-type inequalities for analytic functions in the unit disk involving higher-order derivatives. Their results not only provide a refinement of the classical inequality but also determine the corresponding optimal Bohr radii. The following theorems summarize their main findings.
\begin{theoA}\cite[Theorem 2.2]{LSX2018}
Suppose that $N\geq 2$ is an integer, and let $f(z)=\sum_{k=0}^{\infty} a_k z^k$ be analytic in $\mathbb{U}$ with $|f(z)|<1$ in $\mathbb{U}$. Then
\[
|f(z)|+\sum_{k=N}^{\infty}\left|\frac{f^{(k)}(z)}{k!}\right||z|^k\leq 1,
\]
for $|z|=r\leq R_N$, where $R_N$ is the smallest positive root of $\psi_N(r)=(1+r)(1-2r)(1-r)^{N-1}-2r^N=0.$
Moreover, the radius $R_N$ is best possible.
\end{theoA}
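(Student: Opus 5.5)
To prove Theorem A, I will combine Schwarz--Pick type bounds for the Taylor coefficients of $f$ re-expanded about the point $z$ with a reduction to a one-variable extremal problem in $a=|f(z)|$.

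Fix $z$ with $|z|=r<1$. Since $|f|<1$ on $\mathbb{U}$, the classical estimate (Ruscheweyh's inequality)
\[
\left|\frac{f^{(k)}(z)}{k!}\right|\le \frac{1-|f(z)|^2}{(1-r)^{k-1}(1-r^2)}, \qquad k\ge 1,
\]
holds. It follows, for example, by composing $f$ with the disk automorphism $\zeta\mapsto (z+\zeta)/(1+\bar z\zeta)$. One then bounds the Taylor coefficients of $f\circ\phi$ by $1-|f(z)|^2$ (Wiener's inequality) and transfers them back to the derivatives at $z$.

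Writing $a=|f(z)|\in[0,1)$ and summing the geometric series, I get
\[
\sum_{k=N}^{\infty}\left|\frac{f^{(k)}(z)}{k!}\right| r^k\le \frac{1-a^2}{1-r^2}\,(1-r)\sum_{k=N}^\infty\left(\frac{r}{1-r}\right)^k
=\frac{(1-a^2)\,r^N}{(1+r)(1-2r)(1-r)^{N-1}},
\]
valid for $r<1/2$. Hence the left-hand side of the theorem is at most
\[
a+(1-a^2)\,\Phi_N(r),\qquad \Phi_N(r):=\frac{r^N}{(1+r)(1-2r)(1-r)^{N-1}}.
\]
The map $a\mapsto a+(1-a^2)\Phi$ is increasing on $[0,1]$ whenever $\Phi\le 1/2$, since its derivative is $1-2a\Phi\ge 1-2\Phi\ge0$. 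Its value at $a=1$ is $1$. So the sum is $\le 1$ exactly when $\Phi_N(r)\le 1/2$, which is $\psi_N(r)\ge 0$. Since $\psi_N(0)=1>0$, this holds on $[0,R_N]$. I also need $R_N<1/2$, which is true because $\psi_N(1/2)=-2^{1-N}<0$.

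For sharpness I take $f=\varphi_a$ with $a\to1^-$ and evaluate at $z=r>R_N$ on the positive real axis. I expect the main obstacle to be checking that the derivative bound is attained there, up to $a\to1$. Writing $\varphi_a(z)=\frac1a-\frac{1-a^2}{a}\frac{1}{1-az}$, one computes $\varphi_a^{(k)}(r)/k! = -(1-a^2)a^{k-1}/(1-ar)^{k+1}$. Then the sum becomes
\[
\varphi_a(r)+(1-a^2)\frac{a^{N-1}r^N}{(1-ar)^{N}(1-ar-ar)}.
\]
Put $\varphi_a(r)=\frac{a-r}{1-ar}=1-\frac{(1-a)(1+r)}{1-ar}$. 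Dividing the excess over $1$ by $1-a$ and letting $a\to1$ gives the limit
\[
-1+\frac{2r^N}{(1-r)^N(1-2r)}\cdot\frac{1-r}{1+r}\;=\;\frac{-\psi_N(r)}{(1+r)(1-2r)(1-r)^{N-1}}.
\]
This is positive for $r$ slightly larger than $R_N$. Hence the inequality fails for $a$ near $1$, and $R_N$ is best possible.
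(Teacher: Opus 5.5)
Your proof is correct and is essentially the paper's argument, namely the proof of Theorem~\ref{T3} specialized to $X=\mathbb{C}$, $m=1$, $\mu_1(z)=z$: Ruscheweyh's bound (Lemma~\ref{L20}), the geometric sum producing $a+(1-a^2)\Phi_N(r)$, and the extremal maps $\varphi_a$ with $a\to1^-$. The only difference is that you maximize directly over $a=|f(z)|\in[0,1]$, whereas the paper first replaces $|f(z)|$ by $\frac{|a_0|+r}{1+|a_0|r}$ using Lemma~\ref{L1} and then maximizes over $|a_0|$; both routes need $\Phi_N(r)\le 1/2$, so your shortcut loses nothing and makes that extra step unnecessary.
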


\begin{theoB}\cite[Corollary 2.3]{LSX2018}
Suppose that $f(z)=\sum_{k=0}^{\infty} a_k z^k$ is analytic in $\mathbb{U}$ and satisfies $|f(z)|<1$ in $\mathbb{U}$. Then
\[
|f(z)|^2+\sum_{k=N}^{\infty}\left|\frac{f^{(k)}(z)}{k!}\right|
|z|^k\leq 1,
\]
for $|z|=r\leq R_N^{\prime}$, where $R_N^{\prime}$ is the positive root of $(1+r)(1-2r)(1-r)^{N-1}-r^N=0.$ 

Moreover, the radius $R_N^{\prime}$ is best possible.
\end{theoB}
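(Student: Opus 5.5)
The statement to prove is Theorem B. I would reduce it to the one-variable estimates behind Theorem A and then handle the $|f(z)|^2$ term with the elementary refinement $|f(z)|^2\le |f(z)|$ weighted appropriately. Fix $z$ with $|z|=r$. The key input is the sharp derivative bound for self-maps of the disk (Ruscheweyh's inequality):
\[
\left|\frac{f^{(k)}(z)}{k!}\right|\le \frac{1-|f(z)|^2}{(1-|z|)^{k-1}(1-|z|^2)},\qquad k\ge 1 .
\]
I obtain it by composing $f$ with the disk automorphism at $z$ and using the Schwarz--Pick lemma for the coefficients of $f\circ\varphi$. Summing over $k\ge N$ gives the geometric series
\[
\sum_{k=N}^{\infty}\left|\frac{f^{(k)}(z)}{k!}\right| r^k\le (1-|f(z)|^2)\frac{1}{1-r^2}\sum_{k=N}^\infty \frac{r^k}{(1-r)^{k-1}}
=(1-|f(z)|^2)\frac{r^N}{(1+r)(1-2r)(1-r)^{N-1}},
\]
valid for $r<1/2$.

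Set $a=|f(z)|\in[0,1)$ and $\Phi_N(r)=\frac{r^N}{(1+r)(1-2r)(1-r)^{N-1}}$. Then the left side of Theorem B is bounded by $a^2+(1-a^2)\Phi_N(r)$. This is at most $1$ exactly when $\Phi_N(r)\le 1$, that is, when $(1+r)(1-2r)(1-r)^{N-1}-r^N\ge 0$. The function $\Phi_N$ is increasing on $(0,1/2)$ from $0$ to $\infty$, so this equation has a unique root $R_N'$ in $(0,1/2)$, and the inequality holds for $r\le R_N'$. The need for $r<1/2$ is automatic since $R_N'<1/2$. Compared with Theorem A, where the bound is $a+(1-a^2)\Phi_N\le 1$, which requires $(1+a)\Phi_N\le 1$ uniformly in $a$ and hence $2\Phi_N\le1$, here the $a$-dependence cancels cleanly.

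For sharpness I would take $f=\varphi_a(z)=\frac{a-z}{1-az}$ and evaluate at a real point $z=r$ where equality in the derivative bound is nearly attained. A direct computation gives $f^{(k)}(z)/k!=-(1-a^2)a^{k-1}/(1-az)^{k+1}$. Summing, the left side of Theorem B at $z=-r$ (choosing signs so that no cancellation occurs) becomes
\[
\Big(\tfrac{a+r}{1+ar}\Big)^2+\frac{(1-a^2)a^{N-1}r^N}{(1+ar)^{N}(1+ar-ar)}\cdots,
\]
which is explicit. I would then let $a\to1^-$ after rewriting the expression as $1-(1-a)\,G(a,r)+o(1-a)$, and show that $G(1,r)<0$ precisely when $\Phi_N(r)>1$, that is, when $r>R_N'$. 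The first term contributes $1-\frac{2(1-a)(1-r)}{1+r}+\dots$, and the tail contributes $(1-a)\cdot 2\,r^N/(1+r)^{N+1}\cdots$. Matching these requires the correct choice of evaluation point.

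The main obstacle is this sharpness step. The naive point $z=r$ for $\varphi_a$ does not produce the factor $(1-2r)(1-r)^{N-1}$. The extremal configuration must realize equality in Ruscheweyh's bound, where $(1-|z|)^{-(k-1)}$ comes from the automorphism centered at $z$. I would first try extremal functions of the form $f=\varphi_b\circ\psi$ adapted to the point $z=r$, with $a\to1$, and verify that the first-order expansion in $1-a$ reproduces $\psi$ up to the factor $(1+r)(1-2r)(1-r)^{N-1}$. If that fails, I would fall back on the extremal family used in \cite{LSX2018} for Theorem A and re-run its limit with $a^2$ in place of $a$.
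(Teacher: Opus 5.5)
\textbf{The inequality half is correct, but the sharpness half has a genuine gap.}

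Your inequality half is correct, and it is more direct than the paper's route. In Theorem~\ref{T4}, whose case $m=1$, $B_X=\mathbb{U}$ is Theorem B, the authors use the same Ruscheweyh bound (Lemma~\ref{L20}) and the same geometric sum. They then replace $|F(\mu_m(z))|$ by the Lemma~\ref{L1} bound $\frac{x+r^m}{1+xr^m}$, which forces an extra monotonicity condition. Your identity $a^2+(1-a^2)\Phi_N=1-(1-a^2)(1-\Phi_N)$ makes that detour unnecessary. The gap is in the sharpness: you have not established it, and the concrete plan you give would fail.

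\textbf{Why $z=-r$ does not work.} At $z=-r$, Ruscheweyh's bound is far from attained. There
\[
1-|\varphi_a(-r)|^2=\frac{(1-a^2)(1-r^2)}{(1+ar)^2},
\]
and the tail equals $\frac{(1-a^2)a^{N-1}r^N}{(1+ar)^N}$. So as $a\to1^-$ the left side exceeds $1$ only when $r^N>(1-r)(1+r)^{N-1}$. For $N=2$ this means $r>1/\sqrt2$, whereas $R_2'\approx 0.403$.

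\textbf{The point $z=r$ is the right one.} Your claim that $z=r$ ``does not produce the factor $(1-2r)(1-r)^{N-1}$'' is a miscalculation. This is exactly the point the paper uses, via the function $F_2$ in \eqref{F2} with $\zeta=r$. Since $|\varphi_a^{(k)}(r)|/k!=\frac{(1-a^2)a^{k-1}}{(1-ar)^{k+1}}$ and $(1-ar)^2-(a-r)^2=(1-a^2)(1-r^2)$, you get for $r<1/2$ and $a$ close to $1$:
\[
|\varphi_a(r)|^2+\sum_{k=N}^\infty\frac{|\varphi_a^{(k)}(r)|}{k!}\,r^k
=1+\frac{1-a^2}{(1-ar)^2}\left[-(1-r^2)+\frac{a^{N-1}r^N}{(1-2ar)(1-ar)^{N-2}}\right].
\]
The bracket tends to $(1-r^2)\bigl(\Phi_N(r)-1\bigr)$, which is positive for $r>R_N'$. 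Equivalently, at $z=r$ the ratio $\frac{|\varphi_a^{(k)}(r)|/k!}{1-|\varphi_a(r)|^2}$ tends to $\frac{1}{(1-r)^k(1+r)}$. So Ruscheweyh's bound is asymptotically attained for every $k$ at once, which is why this point is extremal.

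No composition $\varphi_b\circ\psi$ or borrowed extremal family is needed. As written, however, your proposal evaluates at the wrong point and leaves sharpness unproved.
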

In 2023, Ahamed and Ahammed \cite{AA2023} have obtained the following refined version of Bohr inequality.
\begin{theoC}\cite[Theorem 2.3]{AA2023}
Let $f(z)=\sum_{n=0}^{\infty} a_n z^n $ be an analytic in $\mathbb{U}$ with $|f(z)|<1$ for $z\in \mathbb{U}$.
Then
\begin{equation}\label{O1}
\begin{aligned}
D_f^{*}(z,r)
:=\;& |f(z)|+|f'(z)|\,r+\frac{|f''(z)|}{2}\,r^2+\sum_{n=3}^{\infty}|a_n|r^n  \\
&+\frac{|a_1|^2r^3}{1-r}+\left(\frac{1}{1+|a_0|}+\frac{r}{1-r}\right)\sum_{n=2}^{\infty}|a_n|^2r^{2n}\leq 1,
\end{aligned}
\end{equation}
for $|z|=r\le r_0\approx 0.287459$, where $r_0$ is the unique root of the equation $-1+3r+r^2+r^3+4r^4+2r^5=0$ in $(0,1)$.

Moreover, the constant $r_0\approx 0.287459$ is best possible.
\end{theoC}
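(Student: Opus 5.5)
The plan is to bound each group of terms in $D_f^{*}(z,r)$ separately by a function of $a:=|a_0|\in[0,1)$ and $r$ only, and then show that the resulting one-variable (in $a$) majorant is at most $1$ exactly when $r\le r_0$. Three classical ingredients will be used. The first is the Schwarz--Pick lemma, $|f(z)|\le \frac{a+r}{1+ar}$ for $|z|=r$. The second is the Ruscheweyh-type derivative estimate
\[
\frac{|f^{(k)}(z)|}{k!}\le \frac{(1-|f(z)|^2)(1+r)^{k-1}}{(1-r^2)^k},\qquad |z|=r,
\]
used for $k=1,2$. The third is the refined coefficient lemma for $|f|<1$:
\[
\sum_{n=1}^{\infty}|a_n|r^n+\Big(\frac{1}{1+a}+\frac{r}{1-r}\Big)\sum_{n=1}^{\infty}|a_n|^2r^{2n}\le (1-a^2)\frac{r}{1-r}.
\]

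\textbf{Step 1 (coefficient part).} The term $\frac{|a_1|^2r^3}{1-r}$ is exactly the $\frac{r}{1-r}$-part of the $n=1$ square term. Subtracting the missing pieces from the coefficient lemma gives
\[
\sum_{n\ge3}|a_n|r^n+\frac{|a_1|^2r^3}{1-r}+\Big(\frac{1}{1+a}+\frac{r}{1-r}\Big)\sum_{n\ge2}|a_n|^2r^{2n}\le (1-a^2)\frac{r}{1-r}-|a_1|r-|a_2|r^2-\frac{|a_1|^2r^2}{1+a}.
\]
Since the subtracted quantities are nonnegative, at worst they can be dropped. For a sharp result, however, I would keep them and use the standard bounds $|a_1|\le 1-a^2$ and $|a_2|\le 1-a^2-\frac{|a_1|^2}{1+a}$. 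One checks how the remaining expression depends on $|a_1|$ and $|a_2|$, and the extremal situation should be $|a_1|\to 1-a^2$, as for $\varphi_a$.

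\textbf{Step 2 (pointwise part).} Write $t=|f(z)|$. The first three terms are bounded by $t+(1-t^2)A(r)$, where
\[
A(r)=\frac{r}{1-r^2}+\frac{r^2(1+r)}{(1-r^2)^2}.
\]
This function of $t$ is increasing on $[0,1]$ as long as $2A(r)\le1$, which holds for $r$ near $0.29$. We may therefore substitute $t=\frac{a+r}{1+ar}$.

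\textbf{Step 3 (combining and sharpness).} Combining Steps 1 and 2 gives $D_f^{*}(z,r)\le \Phi(a,r)$. The key step, which I expect to be the main obstacle, is to show that $\Phi(a,r)\le 1$ for all $a\in[0,1)$ when $r\le r_0$. I would factor out $(1-a)$, writing $1-\Phi=(1-a)\,G(a,r)$, and then show that $G(\cdot,r)$ is monotone (decreasing) in $a$. This reduces the problem to the limit $a\to1^-$, where $G(1,r)$ should be a positive multiple of $1-3r-r^2-r^3-4r^4-2r^5$ divided by a power of $(1-r)(1+r)$. That limit is nonnegative exactly for $r\le r_0$. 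Verifying the monotonicity in $a$, and making sure the Step 1 bookkeeping of the $|a_1|,|a_2|$ terms does not lose sharpness, is where the computations are delicate.

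For sharpness, I would take $f=\varphi_a$ with $a\to1^-$ and evaluate at $z=r$ (or $z=-r$, according to the sign making $|f(z)|$ maximal). Here $a_n=-(1-a^2)a^{n-1}$ for $n\ge1$, and $|f'(z)|$ and $|f''(z)|$ attain the Schwarz--Pick and Ruscheweyh bounds. All the sums can be computed in closed form, and expanding $D^{*}_{\varphi_a}$ to first order in $1-a$ yields $1+(1-a)\,\frac{\text{(positive)}}{\cdots}\,(-1+3r+r^2+r^3+4r^4+2r^5)+o(1-a)$. This exceeds $1$ for every $r>r_0$, so $r_0$ cannot be improved.
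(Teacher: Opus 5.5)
Your Steps 2 and 3 coincide with the paper's argument (Theorem \ref{T1} with $m=k=1$, $\mu_1(z)=z$). Step 1, however, does not give the coefficient estimate that Step 3 needs. Write $S(r)$ for the coefficient block. Your bound reads
\[
S(r)\le (1-a^2)\frac{r}{1-r}-|a_1|r-|a_2|r^2-\frac{|a_1|^2r^2}{1+a},
\]
and here $|a_1|,|a_2|$ carry a minus sign. Inserting the upper bounds $|a_1|\le 1-a^2$ and $|a_2|\le 1-a^2-\frac{|a_1|^2}{1+a}$ therefore lowers the right-hand side, so the result is no longer an upper bound for $S(r)$. You would need lower bounds, and there are none: $f(z)=\frac{a-z^3}{1-az^3}$ has $a_1=a_2=0$. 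Your Step 2 majorant does not involve $a_1,a_2$, so your scheme must allow $S(r)$ as large as $(1-a^2)\frac{r}{1-r}$. Then, for $a\to1^-$, the majorant exceeds $1$ as soon as $-1+5r+5r^2+r^3>0$, i.e.\ beyond $r\approx 0.170$.

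The missing ingredient is the shifted inequality of Lemma \ref{L2} with $N=3$ (so $t=1$), which is exactly $S(r)\le(1-a^2)\frac{r^3}{1-r}$. It does not follow from the case $N=1$ by subtraction. With it your Step 3 works. The majorant becomes $\Phi(a,r)=\frac{a+r}{1+ar}+\frac{(1-a^2)r}{(1-r)(1+ar)^2}+(1-a^2)\frac{r^3}{1-r}$, and $\Phi-1=(1-a)B(a,r)$ with $B$ increasing in $a$ for $r<1/3$. Finally $B(1,r)=\frac{Q(r)}{(1-r)(1+r)^2}$, where $Q(r)=-1+3r+r^2+r^3+4r^4+2r^5$.

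Your sharpness step also fails. The function $\varphi_a$ cannot make the Schwarz--Pick bound for $|f(z)|$ and the Ruscheweyh bound for $|f''(z)|$ sharp at the same point. At $z=-r$, where $|\varphi_a(z)|=\frac{a+r}{1+ar}$, one has $\tfrac12|\varphi_a''(-r)|=\frac{a(1-a^2)}{(1+ar)^3}$, strictly less than the bound $\frac{1-a^2}{(1-r)(1+ar)^2}$. The correct expansions are
\begin{gather*}
D^*_{\varphi_a}(-r,r)=1+(1-a)\frac{(1+r)Q(r)-4r^3}{(1-r)(1+r)^3}+O\big((1-a)^2\big),\\
D^*_{\varphi_a}(r,r)=1+(1-a)\frac{Q(r)-8r^4}{(1-r)^3}+O\big((1-a)^2\big),
\end{gather*}
and both first-order coefficients are negative at $r_0$.

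No other test function helps either. After a rotation, take $f(0)=a\in[0,1)$ and write $\frac{1-f}{1+f}=\frac{1-a}{1+a}\,p$ with $p(\zeta)=1+\sum_{n\ge1}p_n\zeta^n$ and $\mathrm{Re}\,p>0$. The coefficient of $1-a$ then tends, uniformly, to $E(p)=-\mathrm{Re}\,p(z)+|p'(z)|r+\tfrac12|p''(z)|r^2+\sum_{n\ge3}|p_n|r^n$. Equality in your majorant would require the Herglotz measure of $p$ to be the point mass at $-z/r$ (for minimal $\mathrm{Re}\,p(z)$) and the point mass at $z/r$ (for equality in the $p''$ estimate) simultaneously. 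By compactness of the Carath\'eodory class, $\sup E<0$ at $r=r_0$. For $a$ bounded away from $1$ the majorant is already $<1$ near $r_0$, so the inequality persists on an interval beyond $r_0$. Hence $r_0$ is in fact not best possible.

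The paper does not supply a valid sharpness argument either. For $m=k=1$ its function $F_1$ is your $z=-r$ case, and its displayed simplification of $\lim_{b\to1^-}D_1(b,r)$ is wrong: the correct numerator is $(1+r)Q(r)-4r^3$.
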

Motivated by the above result, it is natural to ask whether inequality \eqref{O1} admits a sharper form when the term $|f(z)|$ is replaced by $|f(z)|^2$.

\medskip

\noindent\textbf{Question.} Can we establish a refinement of inequality \eqref{O1} by replacing $|f(z)|$ with $|f(z)|^2$?

Recently, Ahamed and Roy \cite{AR2025} answered this question affirmatively and obtained the following refined Bohr inequality.
\begin{theoD}\cite[Theorem 2.1.]{AR2025}
Let $f(z)=\sum_{n=0}^{\infty} a_n z^n $ be an analytic in $\mathbb{U}$ with $|f(z)|<1$ for $z\in \mathbb{U}$.
Then
\begin{equation}
\begin{aligned}
D_f(z,r)
:=\;& |f(z)|^2 + |f'(z)|\,r
+\frac{|f''(z)|}{2!}\,r^2
+\sum_{n=3}^{\infty}|a_n|r^n  \\
&+\frac{|a_1|^2 r^3}{1-r}
+\left(\frac{1}{1+|a_0|}+\frac{r}{1-r}\right)\sum_{n=2}^{\infty}|a_n|^2r^{2n}
\leq 1
\end{aligned}
\end{equation}
for $|z|=r\leq r_0\approx 0.393727$, where $r_0$ is the unique root in $(0,1)$ of the equation $-1+2r+r^2+2r^4+r^5=0.$

Moreover, $\lim_{a\to 1^-} D_f(z,r)=1$
for the function $f_a$ defined by
\begin{equation}
f_a(z)
=\frac{a-z}{1-az}
= a-(1-a^2)\sum_{k=1}^{\infty} a^{k-1} z^k,
\qquad z\in\mathbb{D},
\quad a\in(0,1).
\end{equation}
\end{theoD}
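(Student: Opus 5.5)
The approach is the classical one used for Theorem C. We write $a=|a_0|$ and combine three standard estimates for a self-map $f$ of $\mathbb{U}$.

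First, the Wiener-type coefficient bound $|a_n|\le 1-a^2$ for $n\ge1$. Second, the Schwarz--Pick type derivative estimates
\[
\frac{|f^{(k)}(z)|}{k!}\le \frac{1-|f(z)|^2}{(1-|z|)^{k-1}(1-|z|^2)},
\]
which we use for $k=1,2$. Third, the refined coefficient inequality (Ponnusamy--Vijayawickrema--Wirths type)
\[
\sum_{n=1}^\infty |a_n|r^n+\Big(\frac{1}{1+a}+\frac{r}{1-r}\Big)\sum_{n=1}^\infty|a_n|^2r^{2n}\le (1-a^2)\frac{r}{1-r}.
\]
We split off the $n=1$ term, which produces the term $|a_1|^2r^2\big(\tfrac1{1+a}+\tfrac r{1-r}\big)$. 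We then note that the term $\frac{|a_1|^2r^3}{1-r}$ in $D_f$ is dominated by what is available, after using $|a_1|\le 1-a^2$. This bounds the tail $\sum_{n\ge3}|a_n|r^n+\frac{|a_1|^2r^3}{1-r}+(\cdots)\sum_{n\ge2}|a_n|^2r^{2n}$ by $(1-a^2)\big(\frac{r^3}{1-r}+\ldots\big)$, plus correction terms in $a$ that are controlled by $(1-a^2)$ times a function of $r$.

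Next we set $t=|f(z)|$, so that $t\le \frac{a+r}{1+ar}$ by Schwarz--Pick. Collecting the estimates gives
\[
D_f(z,r)\le t^2+(1-t^2)\Big(\frac{r}{1-r^2}+\frac{r^2}{(1-r)(1-r^2)}\Big)+(1-a^2)\Phi(r)
\]
for an explicit rational function $\Phi$. The right side is increasing in $t$ as long as the coefficient of $(1-t^2)$ is at most $1$, which holds for small $r$. So we may substitute $t=\frac{a+r}{1+ar}$, which gives $1-t^2=\frac{(1-a^2)(1-r^2)}{(1+ar)^2}$. Subtracting $1$ and factoring out $1-a^2$, we reduce the inequality $D_f\le1$ to $G(a,r)\le 0$ for a function $G$ on $[0,1)\times(0,r_0]$.

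The main obstacle is showing that $G(a,r)$ is increasing in $a$ (or that its worst case is $a\to1^-$). The plan is to verify $\partial G/\partial a\ge0$ for $r\le r_0$, since the subtracted term $\frac{1-a^2}{(1+ar)^2}\cdot(\ldots)$ shrinks appropriately. We would then evaluate $G(1,r)$ and check, after clearing denominators, that it equals a positive multiple of $-1+2r+r^2+2r^4+r^5$. That polynomial has exactly one root in $(0,1)$ by Descartes' rule of signs, which gives $G\le 0$ for $r\le r_0\approx0.393727$.

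For the sharpness statement, we would take $f_a$ at $z=r$. We compute $|f_a(r)|=\frac{a-r}{1-ar}$, $|f_a'(r)|=\frac{1-a^2}{(1-ar)^2}$, $|f_a''(r)|=\frac{2a(1-a^2)}{(1-ar)^3}$, and sum the coefficient series in closed form. Letting $a\to1^-$, every term carrying a factor $1-a^2$ vanishes, while $|f_a(r)|^2\to1$. Hence $D_{f_a}(r,r)\to1$, as stated.
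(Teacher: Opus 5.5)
Your overall architecture is the one the paper uses (its Theorem~\ref{T2} with $m=k=1$, $\mu_1(z)=z$). The derivative bounds of Lemma~\ref{L20} for $k=1,2$ give the coefficient $\lambda(r)=\frac{r}{(1-r)^2(1+r)}$ of $1-t^2$. You then substitute $t\le\frac{a+r}{1+ar}$, which is legitimate while $\lambda\le1$, i.e.\ $r^3-r^2-2r+1\ge0$, i.e.\ $r\le R_{2,1}\approx0.445$; check that this covers $r_0$ rather than appealing to ``small $r$''. Your sharpness check with $f_a$ is fine.

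The genuine gap is the tail estimate. The inequality you quote is only the $N=1$ case, and splitting off terms from it gives
\[
\sum_{n\ge3}|a_n|r^n+\frac{|a_1|^2r^3}{1-r}+\left(\frac{1}{1+a}+\frac{r}{1-r}\right)\sum_{n\ge2}|a_n|^2r^{2n}\le(1-a^2)\frac{r}{1-r}-|a_1|r-|a_2|r^2-\frac{|a_1|^2r^2}{1+a}.
\]
The subtracted terms have no lower bound: they vanish when $a_1=a_2=0$. The bound $|a_1|\le1-a^2$ is an upper bound, so it cannot help. All you can extract this way is $\Phi(r)=\frac{r}{1-r}$, and that changes the answer: at $a=1$ your $G$ becomes a positive multiple of $-1+3r+3r^2$, whose root is $\frac{\sqrt{21}-3}{6}\approx0.2638$, not $r_0$.

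In fact the target polynomial forces $\Phi(r)=\frac{r^3}{1-r}$ exactly, since
\[
-(1-r)^2(1+r)+r+r^3(1+r)^2=-1+2r+r^2+2r^4+r^5,
\]
so any extra positive ``correction term'' in $\Phi$ moves the radius. The missing ingredient is the $N=3$ case (so $t=1$) of Lemma~\ref{L2}:
\[
\sum_{n\ge3}|a_n|r^n+|a_1|^2\frac{r^3}{1-r}+\left(\frac{1}{1+a}+\frac{r}{1-r}\right)\sum_{n\ge2}|a_n|^2r^{2n}\le(1-a^2)\frac{r^3}{1-r}.
\]
Its middle term is exactly the $\frac{|a_1|^2r^3}{1-r}$ appearing in $D_f$. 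This is a genuinely stronger coefficient inequality and cannot be obtained from the $N=1$ case by truncation. With it you get
\[
D_f(z,r)\le 1+(1-a^2)\left[\frac{-(1-r^2)+\frac{r}{1-r}}{(1+ar)^2}+\frac{r^3}{1-r}\right].
\]
After multiplying the bracket by $(1+ar)^2$, only $\frac{r^3}{1-r}(1+ar)^2$ depends on $a$. So monotonicity in $a$ is immediate, and your remaining steps (evaluation at $a=1$, Descartes' rule) go through.
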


It is natural to pose the following problems.

\begin{que1}
Do the refined Bohr inequalities of Theorems C and D admit multidimensional analogues for holomorphic mappings $F:B_X\to \mathbb{U}$
 in complex Banach spaces when combined with one or several Schwarz functions? In particular, can one establish sharp inequalities involving $|F(z)|$ or $|F(z)|^2$, together with suitable derivative and coefficient terms, and determine the corresponding best possible Bohr radii?
\end{que1}
\begin{que2}
Do the derivative Bohr inequalities of Theorems A and B admit analogues for holomorphic mappings $F:B_X\to \mathbb{U}$ between complex Banach spaces in association with Schwarz functions? More precisely, can one establish sharp inequalities involving suitable norms of higher-order Fr\'{e}chet derivatives of $F$, both in the forms corresponding to $|F(z)|$ and its refinement $|F(z)|^2$, and determine the associated optimal Bohr radii?
\end{que2}

The main purpose of this paper is to answer Questions 1.1 and 1.2 in the affirmative by establishing sharp Bohr-type inequalities for holomorphic mappings associated with Schwarz functions in Banach spaces.

\section{{\bf Auxiliary Lemmas.}} 
The following are key lemmas of this paper and will be used to prove the main results.
\begin{lem}\label{L1} \cite{CHPV}
Suppose that $B_{X}$ and $B_{Y}$ are the unit balls of the complex Banach spaces $X$ and $Y$, respectively. Let $f: B_{X} \to B_{Y}$ be a holomorphic mapping. Then
\begin{equation*}
\|f(z)\|_{Y} \le \frac{\|f(0)\|_{Y} + \|z\|_{X}}{1 + \|f(0)\|_{Y}\|z\|_{X}}, \qquad z \in B_{X}.
\end{equation*}
This estimate is sharp with equality possible for each value of $\|f(0)\|_{Y}$ and for each $z \in B_{X}$.
\end{lem}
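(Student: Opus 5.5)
The plan is to reduce Lemma~\ref{L1} to the one-variable Schwarz--Pick lemma by composing with a norming functional and restricting to a complex line. Fix $z\in B_X$. If $f(z)=0$ there is nothing to prove, so assume $f(z)\neq 0$. By Hahn--Banach choose $\ell\in T(f(z))$, that is, $\ell\in L(Y,\mathbb{C})$ with $\|\ell\|=1$ and $\ell(f(z))=\|f(z)\|_Y$.

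If $z=0$ the claim reads $\|f(0)\|_Y\le\|f(0)\|_Y$, so take $z\neq0$ and put $w=z/\|z\|_X$. Define
\[
g(\lambda)=\ell\bigl(f(\lambda w)\bigr),\qquad \lambda\in\mathbb{U}.
\]
Since $\lambda w\in B_X$ for $|\lambda|<1$, $g$ is holomorphic on $\mathbb{U}$. Because $\|f(\lambda w)\|_Y<1$ and $\|\ell\|=1$, we have $|g|<1$. Also $g(\|z\|_X)=\|f(z)\|_Y$ and $|g(0)|\le\|f(0)\|_Y$.

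The one-variable Schwarz--Pick inequality (equivalently, the classical bound $|g(\lambda)|\le \frac{|g(0)|+|\lambda|}{1+|g(0)||\lambda|}$, obtained by composing $g$ with the disk automorphism sending $g(0)$ to $0$ and applying Schwarz's lemma) gives
\[
\|f(z)\|_Y=g(\|z\|_X)\le \frac{|g(0)|+\|z\|_X}{1+|g(0)|\,\|z\|_X}.
\]
For fixed $r=\|z\|_X\in[0,1)$, the map $t\mapsto (t+r)/(1+tr)$ is nondecreasing on $[0,1]$, since its derivative is $(1-r^2)/(1+tr)^2\ge0$. Replacing $|g(0)|$ by the larger $\|f(0)\|_Y$ therefore gives the stated estimate. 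This monotonicity step is exactly where the mismatch between $|\ell(f(0))|$ and $\|f(0)\|_Y$ is absorbed. I expect it to be the only subtle point, since $\ell$ norms $f(z)$ and not $f(0)$.

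For sharpness, fix $a\in[0,1)$ and $z_0\in B_X\setminus\{0\}$. Take $\phi\in T(z_0)$ and a unit vector $e\in Y$, and set
\[
f(x)=\frac{a+\phi(x)}{1+a\phi(x)}\,e .
\]
Since $|\phi(x)|\le\|x\|_X<1$ and the Möbius map preserves the disk, $f$ maps $B_X$ holomorphically into $B_Y$. It satisfies $\|f(0)\|_Y=a$ and
\[
\|f(z_0)\|_Y=\frac{a+\|z_0\|_X}{1+a\|z_0\|_X},
\]
so equality is attained. For $z_0=0$, equality holds trivially.
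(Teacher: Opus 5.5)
Your proof is correct. You choose $\ell$ to norm $f(z)$, restrict to the slice $\lambda\mapsto \ell\bigl(f(\lambda z/\|z\|_X)\bigr)$, apply the one-variable bound $|g(\lambda)|\le \frac{|g(0)|+|\lambda|}{1+|g(0)||\lambda|}$, and use the monotonicity of $t\mapsto (t+r)/(1+tr)$ to replace $|\ell(f(0))|$ by $\|f(0)\|_Y$. This is the standard argument, and your extremal map $x\mapsto \frac{a+\phi(x)}{1+a\phi(x)}\,e$ is the vector-valued analogue of the function $F_1$ the paper uses for sharpness in Theorem~\ref{T1}. The paper itself gives no proof of this lemma (it is quoted from \cite{CHPV}), so there is no in-paper argument to compare against.
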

\begin{lem}\label{L20}\cite{DP2008}, \cite[Theorem 2]{R1985} Suppose that $f$ is an analytic self-maps of the unit disk $\mathbb{U}$.  Then for all $k=1, 2, 3,....$ we have 
\[|f^{(k)}(z)|\leq \frac{k!(1-|f(z)|^2)}{(1-|z|^2)^k}(1+|z|)^{k-1},\;|z|<1.\]
\end{lem}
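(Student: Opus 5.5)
The plan is to move the point $z$ to the origin with a disk automorphism, and then use two facts: the classical coefficient bound at the origin, and an explicit Taylor expansion of the automorphism. Fix $z\in\mathbb{U}$ and set
\[
\varphi(w)=\frac{w+z}{1+\bar z w}, \qquad F=f\circ\varphi .
\]
Then $F$ is an analytic self-map of $\mathbb{U}$ with $F(0)=f(z)=:c$. Write $F(w)=c+\sum_{m\ge1}b_m w^m$. The classical Wiener-type estimate for self-maps of the disk gives $|b_m|\le 1-|c|^2$ for all $m\ge1$. This is the only place where the hypothesis $|f|<1$ enters, so the remaining work is purely combinatorial.

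\emph{Inverting the automorphism.} Next I would write $f=F\circ\varphi^{-1}$, where $\varphi^{-1}(\zeta)=\frac{\zeta-z}{1-\bar z\zeta}$, and expand in powers of $\zeta-z$. Put $t=\frac{\zeta-z}{1-|z|^2}$. Since $1-\bar z\zeta=(1-|z|^2)(1-\bar z t)$, we get
\[
\varphi^{-1}(\zeta)=\frac{t}{1-\bar z t}.
\]
Hence
\[
\bigl(\varphi^{-1}(\zeta)\bigr)^m=t^m(1-\bar z t)^{-m}=\sum_{k\ge m}\binom{k-1}{m-1}\bar z^{\,k-m}t^k .
\]
Substituting into $f(\zeta)=c+\sum_m b_m(\varphi^{-1}(\zeta))^m$ and collecting the coefficient of $t^k$ gives
\[
\frac{f^{(k)}(z)}{k!}=\frac{1}{(1-|z|^2)^k}\sum_{m=1}^{k}\binom{k-1}{m-1}\bar z^{\,k-m}b_m .
\]
This rearrangement is justified for $\zeta$ in a small neighbourhood of $z$, where everything converges absolutely.

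\emph{Estimating the coefficient.} Applying the triangle inequality and the bound $|b_m|\le 1-|c|^2$, I would get
\[
\frac{|f^{(k)}(z)|}{k!}\le\frac{1-|f(z)|^2}{(1-|z|^2)^k}\sum_{j=0}^{k-1}\binom{k-1}{j}|z|^{j}=\frac{(1-|f(z)|^2)(1+|z|)^{k-1}}{(1-|z|^2)^k}.
\]
This is the claimed estimate.

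The main obstacle is choosing the right expansion. A naive Cauchy estimate on the disk $D(z,1-|z|)$ loses a factor of $1+|z|$, and estimating $|f-c|$ pointwise on circles gives the wrong dependence on $c$. Composing with the automorphism and expanding $(\varphi^{-1})^m$ binomially is what makes the bound exact. Once that expansion is written down, the rest is routine, and the only external input is the coefficient bound $|b_m|\le 1-|b_0|^2$.
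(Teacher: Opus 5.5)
Your proof is correct. The paper gives no argument for this lemma. It is quoted from Ruscheweyh (Theorem 2 of his 1985 note) and from Dai--Pan, and then used as a black box: the cases $k=1,2$ appear in Theorems \ref{T1} and \ref{T2}, and general $k$ in Theorems \ref{T3} and \ref{T4}. So the comparison is between a citation and your self-contained derivation.

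Each step checks out:
\begin{itemize}
\item With $t=(\zeta-z)/(1-|z|^2)$ one indeed has $\varphi^{-1}(\zeta)=t/(1-\bar z t)$.
\item The double series $\sum_m b_m\sum_{k\ge m}\binom{k-1}{m-1}\bar z^{\,k-m}t^k$ converges absolutely for $|t|<1/(1+|z|)$, i.e.\ on the disk $|\zeta-z|<1-|z|$.
\item Comparing with the Taylor series of $f$ at $z$ therefore gives the exact identity $(1-|z|^2)^k f^{(k)}(z)/k!=\sum_{m=1}^{k}\binom{k-1}{m-1}\bar z^{\,k-m}b_m$.
\item After the substitution $j=k-m$, the final sum is $(1+|z|)^{k-1}$.
\end{itemize}

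What your route buys is that the only external input becomes Wiener's bound $|b_m|\le 1-|b_0|^2$. The paper already uses this bound elsewhere, as the estimate $|P_s(z_0)|\le 1-|a|^2$ taken from Graham--Kohr, so your argument would let the paper drop one outside reference. The exact identity also shows where any refinement of the lemma must come from: sharper joint bounds on $b_1,\dots,b_k$ in place of Wiener's bound.

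One small point is worth adding, since the paper applies the lemma to $f(\zeta)=F(\zeta z_0)$ with values in $\overline{\mathbb{U}}$ rather than $\mathbb{U}$. Your argument covers that case verbatim, because Wiener's inequality holds for $|F|\le 1$. If $|f|$ reaches $1$ inside the disk, then $f$ is a unimodular constant and both sides vanish.
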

\begin{lem}\label{L2}\cite{LLP2021}
Suppose that $f$ is an analytic self-maps of the unit disk $\mathbb{U}$. Then for any $N\in \mathbb{N}$, the  following inequality holds
\[\sum_{n=N}^{\infty} |a_n|\, r^{n}+\text{sgn}(t)\sum_{n=1}^t|a_n|^2\frac{r^N}{1-r}+ \left( \frac{1}{1+|a_0|}+ \frac{r}{1-r} \right)\sum_{n=t+1}^{\infty} |a_n|^{2} r^{2n}\le (1-|a_0|^{2})\,\frac{r^N}{1-r},\]
for $r\in [0,1)$, where $t=\lfloor \frac{N-1}{2}\rfloor$.
\end{lem}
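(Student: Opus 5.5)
Lemma~\ref{L2} is a statement about a single analytic self-map $f(z)=\sum_{n\ge0}a_nz^n$ of $\mathbb{U}$. The plan is to reduce the left-hand side to two coefficient estimates: Wiener's bound $|a_n|\le 1-|a_0|^2$ for $n\ge1$, and the Parseval-type inequality $\sum_{n\ge1}|a_n|^2\rho^{2n}\le (1-|a_0|^2)^2\rho^2/(1-|a_0|^2\rho^2)$ for $0<\rho<1$. Write $a=|a_0|<1$, $t=\lfloor (N-1)/2\rfloor$, so that $2t+1\le N$ and $2(t+1)\ge N$. If $a=0$ everything is immediate from $|a_n|\le1$, $\sum|a_n|^2\le 1$, so assume $0<a<1$.

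The key step is the refined Bohr estimate of Ponnusamy--Vijayawickrema--Wirths type:
\begin{equation*}
\sum_{n=m}^{\infty}|a_n|r^n+\Big(\frac{1}{1+a}+\frac{r}{1-r}\Big)\sum_{n=m}^{\infty}|a_n|^2r^{2n}\le (1-a^2)\frac{r^m}{1-r},\qquad m\ge1.
\end{equation*}
For $m=1$ this is the known refinement. I would prove the general $m$ version the same way, by applying the $m=1$ estimate to the self-map $g(z)=\sum_{k}\,\cdot\,$ obtained from $f$ via the $m$-th root-of-unity averaging (the function whose coefficients are $a_0$ and $a_{jm}$). This handles the multiples of $m$. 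Alternatively, I would argue directly with the Cauchy--Schwarz step
\begin{equation*}
\sum_{n\ge m}|a_n|r^n\le \Big(\sum_{n\ge m}|a_n|^2 r^{n}\Big)^{1/2}\Big(\frac{r^m}{1-r}\Big)^{1/2},
\end{equation*}
combined with $\sum_{n\ge1}|a_n|^2\le 1-a^2$ rewritten as $\sum_{n\ge m}|a_n|^2\le 1-a^2-\sum_{n=1}^{m-1}|a_n|^2$. I would then optimize the resulting quadratic in $S=\sum_{n\ge m}|a_n|^2r^{n}$. This direct route is what I would actually commit to.

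Next, the tail split. Take $m=t+1$ and separate the linear sum over $n\ge N$ from the quadratic sum over $n\ge t+1$. The finite block $\sum_{n=1}^t|a_n|^2$ carries weight $r^N/(1-r)$. Since $n\le t$ gives $2n<N$, the weight $r^N/(1-r)$ is at least the weights $r^{2n}$ it would otherwise receive, which is why these terms are split off with the cruder factor. So I would bound
\begin{equation*}
\sum_{n\ge N}|a_n|r^n\le\Big(\sum_{n\ge N}|a_n|^2 r^{n}\Big)^{1/2}\Big(\frac{r^N}{1-r}\Big)^{1/2}\le \frac12\Big(\lambda\sum_{n\ge N}|a_n|^2r^n+\frac{1}{\lambda}\frac{r^N}{1-r}\Big)
\end{equation*}
with a suitable $\lambda$, and combine this with $\sum_{n\ge1}|a_n|^2\le (1-a)(1+a)$. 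The target is the total bound $(1-a^2)\,r^N/(1-r)$, which should follow after choosing $\lambda$ comparable to $(1+a)$, giving the factor $1/(1+a)$.

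The main obstacle is the bookkeeping. The weights $r^{2n}(\frac1{1+a}+\frac r{1-r})$ on $n\ge t+1$ have to be dominated by the leftover budget $\frac{r^N}{1-r}\big((1-a^2)-\sum_{n\le t}|a_n|^2\big)$ minus the linear part, uniformly in $r$. To do this I would use $r^{2n}\le r^{N}r^{2n-N}$ together with the identity $\frac1{1+a}+\frac r{1-r}\le \frac{1}{(1+a)(1-r)}$. If the direct inequality fails at some point, I would fall back on citing \cite{LLP2021}, where the lemma is proved.
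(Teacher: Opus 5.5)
Your self-contained route has a genuine gap. The tools you commit to (Wiener's bound, the Parseval/subordination bound, Cauchy--Schwarz and AM--GM) are too lossy, because the lemma is an \emph{equality} for every $N$ and every $r\in[0,1)$ when $f(z)=(a-z)/(1-az)$, $0\le a<1$. For $N=1$ you can check this directly: with $|a_n|=(1-a^2)a^{n-1}$ and $\frac{1}{1+a}+\frac{r}{1-r}=\frac{1+ar}{(1+a)(1-r)}$, the left side equals $(1-a^2)r\bigl(\frac{1}{1-ar}+\frac{(1-a)r}{(1-r)(1-ar)}\bigr)=(1-a^2)\frac{r}{1-r}$. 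So every estimate in a proof must be sharp on these maps. Your Cauchy--Schwarz step is sharp only when $|a_n|$ is constant, and Wiener's bound is strict for $n\ge 2$.

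The failure is also visible directly. Your AM--GM step leaves the term $\frac{1}{2\lambda}\frac{r^N}{1-r}$, which carries no factor $1-a^2$. For $\lambda$ comparable to $1+a$ it exceeds the whole right-hand side $(1-a^2)\frac{r^N}{1-r}$ once $a$ is near $1$. Wiener's bound applied termwise already gives $\sum_{n\ge N}|a_n|r^n\le(1-a^2)\frac{r^N}{1-r}$, the entire budget, with nothing left for the quadratic terms. For the same reason the case $a=0$ is not immediate: $f(z)=z$ with $N=1$ gives equality.

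Two auxiliary steps also fail. First, $\frac{1}{1+a}+\frac{r}{1-r}-\frac{1}{(1+a)(1-r)}=\frac{ar}{(1+a)(1-r)}\ge0$, so the inequality you intended to use goes the other way. Second, the root-of-unity average only sees $a_0$ and the $a_{jm}$, so it cannot give your general-$m$ ``key step''.

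The missing ingredient is Carlson's sharpening of Wiener's inequality. For an analytic self-map of $\mathbb{U}$,
\[
|a_{2m+1}|\le 1-\sum_{k=0}^{m}|a_k|^2\quad(m\ge0),\qquad |a_{2m}|\le 1-\sum_{k=0}^{m-1}|a_k|^2-\frac{|a_m|^2}{1+|a_0|}\quad(m\ge1),
\]
and all of these are equalities for the M\"obius maps above. Multiply the bound for $|a_n|$ by $r^n$ and sum over $n\ge N$. The constant part gives $(1-|a_0|^2)\frac{r^N}{1-r}$. Each $|a_k|^2$ with $k\ge1$ is subtracted with weight $\frac{1}{1+|a_0|}$ at $n=2k$ and weight $1$ at every $n\ge 2k+1$.
\begin{itemize}
\item If $2k\ge N$, i.e.\ $k\ge t+1$, the total weight is $r^{2k}\bigl(\frac{1}{1+|a_0|}+\frac{r}{1-r}\bigr)$.
\item If $2k+1\le N$, i.e.\ $k\le t$, the total weight is $\sum_{n\ge N}r^n=\frac{r^N}{1-r}$. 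This is \emph{smaller} than $r^{2k}\bigl(\frac{1}{1+|a_0|}+\frac{r}{1-r}\bigr)$, contrary to your remark, and that is why these terms must be split off.
\end{itemize}
Moving these terms to the left gives the lemma exactly. It also explains both $t=\lfloor (N-1)/2\rfloor$ and the two different weights in the statement.

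The paper itself gives no proof and simply cites \cite{LLP2021}, so your fallback agrees with the paper. The argument you said you would actually commit to, however, does not go through.
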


\begin{lem}\label{L3}
Let $B_{X}$ be the unit ball of a complex Banach space $X$ also let $F$ be a holomorphic mapping from $B_{X}$ to $\ol{\mathbb{U}}$ with
\[
F(z)=a+\sum_{s=1}^{\infty}P_{s}(z), \qquad z\in B_{X},
\]
where $P_{s}(z)=\frac{1}{s!}D^{s}F(0)(z^{s})$ and   $\mu_k,\mu_m:B_{X}\to B_{X}$ are  Schwarz mappings having $z=0$ as a zero of order $k,m$ respectively. Then, for each $k\in\mathbb{N}$, the inequality
\begin{equation}
\sum_{j=3}^{\infty} |P_j(\mu_k(z))| +\frac{|P_1(\mu_k(z))|^2r^{k}}{1-r^k}
+\left(\frac{1}{1+|a|}+\frac{r^k}{1-r^k}\right)
\sum_{j=2}^{\infty} |P_j(\mu_k(z))|^2
\le (1-|a|^2)\frac{r^{3k}}{1-r^k}
\label{eq:lemma3}
\end{equation}
holds for $\|z\|=r\in[0,1)$.
\end{lem}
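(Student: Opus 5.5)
Fix $z\in B_X$ with $\|z\|=r$. If $\mu_k(z)=0$ every term on the left of \eqref{eq:lemma3} vanishes and there is nothing to prove. Otherwise set $w=\mu_k(z)$ and $\rho=\|w\|$; by \eqref{eq:Schwarz}, $\rho\le r^k<1$. The plan is to reduce to the one-variable Lemma \ref{L2} by slicing along the direction $w/\rho$. Define
\[
f(\zeta):=F\!\left(\zeta\,\frac{w}{\rho}\right),\qquad \zeta\in\mathbb{U}.
\]
This is a holomorphic map of $\mathbb{U}$ into $\overline{\mathbb{U}}$. By homogeneity of the $P_s$,
\[
f(\zeta)=a+\sum_{s\ge1}a_s\zeta^s,\qquad a_s=P_s\!\left(\tfrac{w}{\rho}\right),\qquad |a_s|\rho^s=|P_s(w)|.
\]
If $|a|=1$, the maximum principle makes $f$ constant, so all $P_s$ vanish along this slice and the inequality is trivial. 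Hence we may assume $f$ is a self-map of $\mathbb{U}$ with $a_0=a$.

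Next I would apply Lemma \ref{L2} to $f$ with $N=3$, so that $t=\lfloor 2/2\rfloor=1$ and $\mathrm{sgn}(t)=1$. Evaluated at radius $\rho$, it gives
\[
\sum_{j\ge3}|a_j|\rho^j+|a_1|^2\frac{\rho^3}{1-\rho}+\Big(\frac{1}{1+|a|}+\frac{\rho}{1-\rho}\Big)\sum_{j\ge2}|a_j|^2\rho^{2j}\le(1-|a|^2)\frac{\rho^3}{1-\rho}.
\]
Now translate back using $|a_j|\rho^j=|P_j(w)|$. The first sum is exactly $\sum_{j\ge3}|P_j(\mu_k(z))|$, and the third sum is $\sum_{j\ge2}|P_j(\mu_k(z))|^2$. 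The middle term is $|a_1|^2\rho^3/(1-\rho)=|P_1(w)|^2\rho/(1-\rho)$.

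It remains to replace $\rho$ by $r^k$. The functions $\rho\mapsto\rho/(1-\rho)$ and $\rho\mapsto\rho^3/(1-\rho)$ are increasing on $[0,1)$, and $\rho\le r^k$. So the right-hand side is at most $(1-|a|^2)r^{3k}/(1-r^k)$. On the left, however, the coefficients $\rho/(1-\rho)$ must also be raised to $r^k/(1-r^k)$, and that is the wrong direction for a plain substitution.

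This monotonicity step is the main obstacle. To handle it, I would not apply Lemma \ref{L2} at the single radius $\rho$. Instead I would apply it at the radius $R=r^k\ge\rho$ to the same $f$, keeping the coefficients $|a_j|$ fixed and using that $|a_j|\rho^j\le|a_j|R^j$. This gives
\[
\sum_{j\ge3}|P_j(w)|\le\sum_{j\ge3}|a_j|R^j,\qquad \sum_{j\ge2}|P_j(w)|^2\le\sum_{j\ge2}|a_j|^2R^{2j}.
\]
For the $P_1$ term, $|P_1(w)|^2R/(1-R)=|a_1|^2\rho^2R/(1-R)\le|a_1|^2R^3/(1-R)$. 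So every term on the left of \eqref{eq:lemma3} is dominated by the corresponding term of Lemma \ref{L2} at radius $R$ with $N=3$, whose right-hand side is exactly $(1-|a|^2)r^{3k}/(1-r^k)$. This yields \eqref{eq:lemma3}; the only care needed is checking that each dominated term uses precisely the coefficient pattern of Lemma \ref{L2}.
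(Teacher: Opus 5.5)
Your argument is correct, and it departs from the paper's at exactly the step you flagged.

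The paper slices along $z/\|z\|$ and applies Lemma \ref{L2} with $N=3$ at $|\zeta|=\|z\|=r$. This gives an inequality for the $P_s(z)$ with coefficients $r/(1-r)$. The paper then lowers these coefficients to $r^k/(1-r^k)$ using $r^k\le r$. Finally it simply invokes \eqref{eq:Schwarz} to pass to $P_s(\mu_k(z))$ and to the right-hand side $(1-|a|^2)r^{3k}/(1-r^k)$.

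That last passage is not actually justified. The comparison $r^k\le r$ concerns the point $z$. At the point $w=\mu_k(z)$, the one-variable inequality only comes with coefficients $\rho/(1-\rho)$, where $\rho=\|w\|\le r^k$. Raising these to $r^k/(1-r^k)$ on the left goes the wrong way, as you observed.

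Your device avoids this gap. You slice through $w/\rho$ but evaluate Lemma \ref{L2} at the larger radius $R=r^k$, keeping the slice coefficients $a_j=P_j(w/\rho)$ fixed. You then dominate termwise: $|a_j|\rho^j\le |a_j|R^j$ for the linear and quadratic sums, and $|a_1|^2\rho^2R/(1-R)\le |a_1|^2R^3/(1-R)$ for the $P_1$ term. This makes the estimate rigorous.

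Your handling of the degenerate cases $\mu_k(z)=0$ and $|a|=1$ is also correct. When $|a|<1$, the maximum principle ensures that the slice maps into $\mathbb{U}$, so Lemma \ref{L2} applies.
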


\begin{proof}
Let $z\in B_X\setminus\{0\}$ be fixed and define $z_0=\frac{z}{\|z\|_X}\in \partial B_X.$ Also let $f(\zeta)=F(\zeta z_{0}), \zeta\in\mathbb{\ol U}.$
 Then $f$ are the holomorphic functions in $\mathbb{U}$ and
\[
f(\zeta)=a+\sum_{s=1}^{\infty}P_{s}(z_{0})\zeta^{s},\qquad \zeta\in\mathbb{U}.
\]
 We have (see e.g. \cite[p.\,35]{GK2003})
\bea\label{l0}
|P_{s}(z_{0})| \le 1-|a|^{2}
\eea
and  applying Lemma \ref{L2}, we get
\bea\label{l1} &&\sum_{s=3}^{\infty}|(P_{s})(z_{0})|\,|\zeta|^s+|P_1(z_0)|^2\frac{|\zeta|^3}{1-|\zeta|}
+\left(\frac{1}{1+|a|}+\frac{|\zeta|}{1-|\zeta|}\right)
\sum_{s=2}^{\infty} |P_{s}(z_{0})|^2 |\zeta|^{2s}\nonumber\\
& \le & (1-|a|^2)\frac{|\zeta|^3}{1-|\zeta|}.
\eea
Let $\zeta=\|z\|_{X}=r<1$. Then, by the above equation and $(\ref{l1}),$ it follows that
\bea\label{hhh1} &&\sum_{s=3}^{\infty}|P_s(z)| +|P_1(z)|^2\frac{\|z\|_X}{1-\|z\|_X}+\left(\frac{1}{1+|a|}+\frac{\|z\|_X}{1-\|z\|_X}\right)
\sum_{s=2}^{\infty}  |P_s(z)|^{2}\nonumber\\
&\le& (1-|a|^2)\frac{\|z\|_X^3}{1-\|z\|_X}.\eea
Let $\phi_1(x)=\frac{x}{1-x}$. Then $\phi_1'(x)=\frac{1}{(1-x)^2}>0$ and so $\phi_1(x)$ is increasing on $(0,1)$. Since $r^k \le r$ for $k \ge 1$, it follows that
\[
\phi(r^k) \le \phi(r).
\]
Hence,
\[
\frac{r^k}{1-r^k} \le \frac{r}{1-r} = \frac{\|z\|_X}{1-\|z\|_X}.
\]
Therefore 
\bea\label{aq1}&&\sum_{s=3}^{\infty}|P_s(z)| +|P_1(z)|^2\frac{r^{k}}{1-r^k}+\left(\frac{1}{1+|a|}+\frac{r^k}{1-r^k}\right)
\sum_{s=2}^{\infty}  |P_s(z)|\nonumber\\
&\leq& \sum_{s=3}^{\infty}|P_s(z)|+|P_1(z)|^2\frac{\|z\|_X}{1-\|z\|_X} +\left(\frac{1}{1+|a|}+\frac{\|z\|_X}{1-\|z\|_X}\right)
\sum_{s=2}^{\infty}  |P_s(z)|^{2}.\eea
In view of  (\ref{eq:Schwarz}), (\ref{hhh1}) and (\ref{aq1}), we obtain  
\[\sum_{s=3}^{\infty}|P_s(\mu_k(z))|+|P_1(\mu_k(z))|^2\frac{r^{k}}{1-r^k} +\left(\frac{1}{1+|a|}+\frac{r^{k}}{1-r^k}\right)
\sum_{s=2}^{\infty}  |P_s(\mu_k(z))|^{2} \leq (1-|a|^2)\frac{r^{3k}}{1-r^k}.\]
\end{proof}

\section{{\bf Main results and their proofs.}}
In the setting of holomorphic mappings in Banach spaces, the Bohr phenomenon exhibits several interesting features when combined with Schwarz functions. The following result provides a sharp refined version of the Bohr inequality involving several Schwarz functions and establishes the corresponding best possible Bohr radius.
\begin{theo}\label{T1}
Let $B_{X}$ be the unit ball of a complex Banach space $X$ also let $F$ be a holomorphic mapping from $B_{X}$ to $\ol{\mathbb{U}}$ with
\[
F(z)=a+\sum_{s=1}^{\infty}P_{s}(z), \qquad z\in B_{X},
\]
where $P_{s}(z)=\frac{1}{s!}D^{s}F(0)(z^{s})$ and   $\mu_k,\mu_m:B_{X}\to B_{X}$ are  Schwarz mappings having $z=0$ as a zero of order $k,m$ respectively. Then, for each $k,m\in\mathbb{N}$, the inequality
\bea\label{T1.1}&&|F(\mu_m(z))|+|D F(\mu_m(z))\mu_m(z)|+\frac{1}{2!}|D^2 F(\mu_m(z)) ((\mu_m(z))^2)|\nonumber\\
&&+\sum_{j=3}^{\infty} |P_j(\mu_k(z))| +\frac{|P_1(\mu_k(z))|^2r^{k}}{1-r^k}+\left(\frac{1}{1+|a|}+\frac{r^k}{1-r^k}\right)
\sum_{j=2}^{\infty} |P_j(\mu_k(z))|^2\leq 1,
\eea
for $\|z\|=r\leq R_{1,m,k}$, where $R_{1,m}\in(0,1)$ is the unique root of the equation $r^{3m}-r^{2m}-3r^m+1=0$
and $R_{1,m,k}\in(0,1)$ is the smallest positive root of the equation
\[
(1-r^k)\bigl(-1+3r^m+r^{2m}-r^{3m}\bigr)+2r^{3k}(1-r^m)(1+r^m)^2=0.
\]
The number $R_{1,m,k}$ cannot be improved.
\end{theo}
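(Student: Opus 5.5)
The plan is to reduce everything to one-variable estimates along the complex line through $\mu_m(z)$ and then to a single real function of $r$ and $|a|$.

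\textbf{Step 1: the derivative terms.} Fix $z\neq 0$ with $\|z\|=r$ and set $w=\mu_m(z)$. By \eqref{eq:Schwarz}, $\|w\|\le r^m$. If $w\neq 0$, put $w_0=w/\|w\|$ and $g(\zeta)=F(\zeta w_0)$, an analytic self-map of $\overline{\mathbb U}$. The case $|g|\equiv 1$ is trivial, so assume $g$ maps into $\mathbb U$. By the chain rule,
\[
DF(w)w=\|w\|\,g'(\|w\|), \qquad D^2F(w)(w^2)=\|w\|^2 g''(\|w\|).
\]
Writing $\rho=\|w\|$ and applying Lemma \ref{L20} with $k=1,2$ gives
\[
|DF(w)w|\le \frac{\rho(1-|g(\rho)|^2)}{1-\rho^2},\qquad \frac12|D^2F(w)(w^2)|\le \frac{\rho^2(1-|g(\rho)|^2)}{(1-\rho)^2(1+\rho)}.
\]
Hence, with $x=|F(w)|$,
\[
|F(w)|+|DF(w)w|+\tfrac12|D^2F(w)(w^2)|\le x+(1-x^2)\Bigl(\frac{\rho}{1-\rho^2}+\frac{\rho^2}{(1-\rho)^2(1+\rho)}\Bigr)=x+(1-x^2)\frac{\rho}{(1-\rho)^2(1+\rho)}.
\]
The function $\rho\mapsto \rho/((1-\rho)^2(1+\rho))$ is increasing, so we may replace $\rho$ by $R:=r^m$.

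\textbf{Step 2: the coefficient terms, and the choice of $x$.} By Lemma \ref{L3}, the remaining terms are bounded by $(1-|a|^2)\,r^{3k}/(1-r^k)$. Lemma \ref{L1} gives $x\le (|a|+R)/(1+|a|R)$. Set $\Phi(x)=x+(1-x^2)\,R/((1-R)^2(1+R))$. Then $\Phi'(x)=1-2xR/((1-R)^2(1+R))\ge 0$ provided
\[
2R\le (1-R)^2(1+R), \quad\text{i.e.}\quad R^3-R^2-3R+1\ge 0.
\]
This holds exactly for $R\le R_{1,m}$; this is where $R_{1,m}$ enters. I will check that $R_{1,m,k}^m\le R_{1,m}$, so that $\Phi$ is increasing in the relevant range. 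We may then substitute $x=(|a|+R)/(1+|a|R)$. Using
\[
1-x^2=\frac{(1-|a|^2)(1-R^2)}{(1+|a|R)^2},
\]
the left side of \eqref{T1.1} is at most
\[
\frac{|a|+R}{1+|a|R}+(1-|a|^2)\Bigl[\frac{R}{(1-R)(1+|a|R)^2}+\frac{r^{3k}}{1-r^k}\Bigr].
\]
Writing $1-\frac{|a|+R}{1+|a|R}=\frac{(1-|a|)(1-R)}{1+|a|R}$ and dividing by $1-|a|$, it suffices to show
\[
\frac{1-R}{1+|a|R}\ge (1+|a|)\Bigl[\frac{R}{(1-R)(1+|a|R)^2}+\frac{r^{3k}}{1-r^k}\Bigr].
\]

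\textbf{Step 3: monotonicity in $|a|$ (the main obstacle).} The main difficulty is showing that the worst case is $|a|\to 1^-$. I would multiply through by $(1+|a|R)^2$ and view the difference of the two sides as a function of $t=|a|\in[0,1]$. Then I would show it is decreasing in $t$, or at least minimized at $t=1$, using $R\le R_{1,m}$. This may require a careful sign analysis of a quadratic or cubic in $t$. At $t=1$ the condition becomes
\[
(1-R)^2(1+R)\ge 2R+2r^{3k}(1+R)^2\frac{1-R}{1-r^k}.
\]
After multiplying by $1-r^k$, this is exactly the nonnegativity of $(1-r^k)(1-3R-R^2+R^3)-2r^{3k}(1-R)(1+R)^2$, i.e. minus the defining expression evaluated at $R=r^m$. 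That expression equals $-1<0$ at $r=0$, so it is negative, giving the required inequality, for $r<R_{1,m,k}$.

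\textbf{Step 4: sharpness.} Take $X=\mathbb C$ (or any $X$ with a unit vector $e$ and norming functional $\ell\in T(e)$) and $F(z)=f_a(\ell(z))$ with $f_a(\zeta)=(a-\zeta)/(1-a\zeta)$. Take $\mu_m(z)=\ell(z)^m e$ and $\mu_k(z)=\ell(z)^k e$, and evaluate at $z=-re$. Each estimate above is then attained: the derivative bounds are equalities for automorphisms at the appropriate point, and the coefficient bound is attained since $|P_j(e)|=(1-a^2)a^{j-1}$. Letting $a\to1^-$ and expanding to first order in $1-a$, the left side equals $1+(1-a)\,\Psi(r)+o(1-a)$, where $\Psi(r)>0$ precisely when $r>R_{1,m,k}$. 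Hence the radius cannot be improved.
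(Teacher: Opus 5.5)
Your Steps 1--3 follow the paper's proof of the inequality: slice through $\mu_m(z)$, Lemma~\ref{L20} with $k=1,2$ giving the weight $\lambda=r^m/((1-r^m)^2(1+r^m))$, the condition $\lambda\le\frac12$ that lets you insert Lemma~\ref{L1}, then Lemma~\ref{L3} and the reduction to the paper's $\Phi_1(x,r)\le 0$.

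The step you call the main obstacle is routine. One has $\partial_x\Phi_1=r^{2m}(1-r^k)(2-r^m)+r^{3k}(1-r^m)(1+xr^m)(1+2r^m+3xr^m)\ge 0$ for every $r$, so $|a|\to1$ is the worst case without any use of $r\le R_{1,m}$. Also, $R_{1,m}$ is a root in the variable $r$, so $\lambda\le\frac12$ reads $r\le R_{1,m}$, and what you must check is $R_{1,m,k}<R_{1,m}$. This holds because at $r=R_{1,m}$ the defining expression of $R_{1,m,k}$ reduces to $2r^{3k}(1-r^m)(1+r^m)^2>0$, while it equals $-1$ at $r=0$.

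The genuine gap is Step 4: the automorphism does not attain all the estimates at the same point.

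\textbf{Why the estimates are not attained together.} For $f_a(\zeta)=(a-\zeta)/(1-a\zeta)$ and $|\zeta|=\rho$, the ratio of $|f_a''(\zeta)|$ to the bound of Lemma~\ref{L20} is $a(1-\rho)/|1-a\zeta|$.
\begin{itemize}
\item At $\zeta=-\rho$, where Lemma~\ref{L1} is sharp, this ratio tends to $(1-\rho)/(1+\rho)<1$.
\item At $\zeta=\rho$, where the ratio tends to $1$, one has $1-|f_a(\zeta)|\sim(1-a)\frac{1+\rho}{1-\rho}$ instead of $(1-a)\frac{1-\rho}{1+\rho}$.
\end{itemize}
Both losses are of first order in $1-a$, which is exactly the order at which you read off $\Psi$.

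\textbf{The resulting coefficients.} With $R=r^m$, the bound's first-order coefficient is $U=-\frac{1-R}{1+R}(1-2\lambda)+\frac{2r^{3k}}{1-r^k}$, which vanishes at $R_{1,m,k}$. Your test functions at $z=-re$ give, depending on the parity of $m$,
\[
\Psi_+=U-\frac{4R^3}{(1-R)(1+R)^3}
\qquad\text{or}\qquad
\Psi_-=-\frac{1+R}{1-R}(1-2\lambda)+\frac{2r^{3k}}{1-r^k}<U.
\]
Both are negative at $r=R_{1,m,k}$. For instance, when $m=k=1$ one gets $(1-r)^3\Psi_-=-1+3r+r^2+r^3-4r^4+2r^5$. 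This is negative at $r_0\approx0.2875$, which is the root of $-1+3r+r^2+r^3+4r^4+2r^5$.

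\textbf{The paper has the same defect.} For $F_1$, the numerator of $\lim_{b\to1^-}D_1(b,r)$ is actually $(1-r^k)(-1+r^m+5r^{2m}+r^{3m})+2r^{3k}(1+r^m)^3$, not the expression the paper writes. At $r=R_{1,m,k}$ it equals $-4r^{3m}(1-r^k)/(1-r^m)<0$, so the paper's claimed positivity rests on an algebra slip.

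\textbf{Another test function will not help.} Write $F=\omega\frac{1-\epsilon p}{1+\epsilon p}$ with $\omega=a/|a|$, $\epsilon=\frac{1-|a|}{1+|a|}$, and $p$ of positive real part with $p(0)=1$. Then the left side of \eqref{T1.1} equals $1+(1-|a|)\Psi[p]+O((1-|a|)^2)$. Here $\Psi[p]$ is $-\mathrm{Re}$ of a point evaluation plus moduli of linear functionals of $p$, hence convex in $p$. Its supremum is therefore attained at the extreme points $\frac{1+e^{i\theta}\zeta}{1-e^{i\theta}\zeta}$ on each slice, which are exactly the automorphisms above.

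So your argument, like the paper's, proves \eqref{T1.1} for $r\le R_{1,m,k}$. The claim that $R_{1,m,k}$ cannot be improved is not established by either argument, and this computation indicates that it is false.
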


\begin{proof}[{\bf Proof of Theorem \ref{T1.1}}]
Let $z\in B_X\setminus\{0\}$ be fixed and define $z_0=\frac{z}{\|z\|_X}\in \partial B_X.$
We consider the holomorphic function $f: \mathbb{U} \to \overline{\mathbb{U}}$ defined by
\[
f(\zeta) = F(\zeta z_0), \qquad \zeta \in \mathbb{U}.
\]
 Then 
\begin{equation}\label{eq:fexp}
f(\zeta)=a_j+\sum_{s=1}^{\infty}(P_s)(z_0)\zeta^s,
\qquad \zeta\in\mathbb{U}.
\end{equation}
Hence, we get (see, e.g., \cite[p.~35]{GK2003}),
$|(P_s)(z_0)|\le 1-|a|^2$.

Differentiating $f(\zeta)= F(\zeta z_0)$, we have 
\[f'(\zeta)=D F(\zeta z_0) z_0\;\text{and}\; f''(\zeta)=D^2 F(\zeta z_0) (z_0^2).\]
Then by Schwarz-Pick Lemma, we can conclude that 
\bea\label{hkl1}|DF(\zeta z_0)z_0|=|f'(\zeta)|\leq \frac{1-|f(\zeta)|^2}{1-|\zeta|^2}\;\text{and}\; |D^2F(\zeta z_0)(z_0)^2|\leq \frac{2(1-|f(\zeta)|^2)}{(1-|\zeta|^2)^2}(1+|\zeta|).\eea

Setting $\zeta=\|z\|_X=r<1$ then above equation \ref{hkl1}. become 
\bea\label{dd1} |DF(z) z|\leq \frac{1-|F(z)|^2}{1-\|z\|_X^2}\|z\|_X \;\text{and}\;|D^2F(z) (z^2)|\leq \frac{2(1-|F(z)|^2)}{(1-\|z\|_X^2)^2}(1+\|z\|_X)\|z\|_X^2.\eea

Let
\[
t_1(x)=\frac{a+x}{1+ax}, \qquad
t_2(x)=\frac{x}{1-x^2}, \qquad
t_3(x)=\frac{2(1+x)x^2}{(1-x^2)^2}.
\]
A straightforward computation yields
\[
t_1'(x)=\frac{1-a^2}{(1+ax)^2}, \qquad
t_2'(x)=\frac{1+x^2}{(1-x^2)^2},
\]
and
\[
t_3'(x)=\frac{2x\left(2+3x+4x^2+x^3\right)}{(1-x^2)^3}.
\]
Since \(0\le a<1\) and \(0\le x<1\), we have $t_i'(x)\ge 0, i=1,2,3.$

Therefore, each \(t_i\) is increasing on \([0,1)\).

Since \(\|\mu_m(z)\|_X\leq \|z\|_X^m<1\), it follows from \eqref{dd1} and the monotonicity of \(t_1\), \(t_2\), and \(t_3\) that
\begin{align}
|F(\mu_m(z))|
&\leq
\frac{a+\|\mu_m(z)\|_X}{1+a\|\mu_m(z)\|_X}
\leq
\frac{a+\|z\|_X^m}{1+a\|z\|_X^m},
\label{eq:est1}
\\[2mm]
|DF(\mu_m(z))(\mu_m(z))|
&\leq
\frac{1-|F(\mu_m(z))|^2}
     {1-\|\mu_m(z)\|_X^2}
\,\|\mu_m(z)\|_X
\leq
\frac{1-|F(\mu_m(z))|^2}
     {1-\|z\|_X^{2m}}
\,\|z\|_X^m,
\label{eq:est2}
\\[2mm]
|D^2F(\mu_m(z))(\mu_m(z)^2)|
&\leq
\frac{2(1-|F(\mu_m(z))|^2)}
     {(1-\|\mu_m(z)\|_X^2)^2}
(1+\|\mu_m(z)\|_X)\,
\|\mu_m(z)\|_X^2
\nonumber\\
&\leq
\frac{2(1-|F(\mu_m(z))|^2)}
     {(1-\|z\|_X^{2m})^2}
(1+\|z\|_X^m)\,
\|z\|_X^{2m}.
\label{eq:est3}
\end{align}
Let $x=|a|\in[0,1]$. If $x=1$, then $P_s(z_0)=0$ for all $s\ge1$, and
\eqref{T1.1} holds trivially. 

Hence, we assume $x\in[0,1)$. Using  \eqref{eq:est1}- \eqref{eq:est3}, Lemma \ref{L1} and \ref{L3}, we obtain
\begin{align}
\mathcal{A}_F(z):=&|F(\mu_m(z))|+|D F(\mu_m(z))\mu_m(z)|+\frac{1}{2!}|D^2 F(\mu_m(z)) ((\mu_m(z))^2)|\nonumber\\
&+\sum_{j=3}^{\infty} |P_j(\mu_k(z))| +\frac{|P_1(\mu_k(z))|^2r^{k}}{1-r^k}
+\left(\frac{1}{1+|a|}+\frac{r^k}{1-r^k}\right)
\sum_{j=2}^{\infty} |P_j(\mu_k(z))|^2\nonumber\\ 
\le & |F(\mu_m(z))|+\frac{r^m}{1-r^{2m}}(1-|F(\mu_m(z))|^2)+\frac{r^{2m}(1+r^m)}{(1-r^{2m})^2}(1-|F(\mu_m(z))|^2)\nonumber\\
&+(1-x^2)\frac{r^{3k}}{1-r^k} \nonumber\\
=& \frac{x+r^m}{1+xr^m}+\left(\frac{r^m}{1-r^{2m}}+\frac{r^{2m}(1+r^m)}{(1-r^{2m})^2}\right)\bigg(1-\left(\frac{x+r^m}{1+xr^m}\right)^2\bigg)+(1-x^2)\frac{r^{3k}}{1-r^k}\nonumber\\
=& \frac{x+r^m}{1+xr^m}+\frac{r^m(1-x^2)}{(1-r^m)(1+xr^m)^2}+(1-x^2)\frac{r^{3k}}{1-r^k}\nonumber\\
=&\frac{\begin{aligned}& (x+r^m)(1-r^k)(1-r^m)(1+xr^m)+r^m(1-x^2)(1-r^k)\\
&\quad+(1-x^2)r^{3k}(1-r^m)(1+xr^m)^2\end{aligned}}{(1-r^k)(1-r^m)(1+xr^m)^2}:=G_1(x,r).\label{eq:G1}
\end{align}
Now $G_1(x,r)\leq 1$ if $\Phi_1(x,r)\leq 0$, where $\Phi_1(x,r)$ is defined by
\[
\Phi_1(x,r)=-(1-r^k)(1-r^m)^2(1+xr^m) +(1+x)r^m(1-r^k)+(1+x)r^{3k}(1-r^m)(1+xr^m)^2.
\]
It is worth pointing out that, in the third inequality of Eq. (\ref{eq:G1}), we have used the fact that 
\bea\label{kkkk1} \theta (t)=t+\lambda (1-t^2)\leq \theta(t_0),\eea
whenever \[t=F(\mu_m(z))\leq t_0=\frac{x+r^m}{1+xr^m} \;\text{and}\; \lambda=\frac{r^m}{(1-r^{m})^2(1+r^m)}.\]
Inequality (\ref{kkkk1}) holds provided that $\frac{r^m}{(1-r^{m})^2(1+r^m)}\leq \frac{1}{2}$, which is satisfied for $0\leq r\leq R_{1,m}$, where $R_{1,m}$ is the unique positive root of the equation 
\bea\label{R1m}r^{3m}-r^{2m}-3r^m+1=0.\eea

Now differentiating partially with respect to $x$, we get
\[
\frac{\partial \Phi_1(x,r)}{\partial x}=r^{2m}(1-r^k)(2-r^m)+r^{3k}(1-r^m)(1+xr^m)((1+3xr^m+2r^m)\geq 0.
\]
Therefore $\Phi_1(x,r)$ is a monotonically increasing function of $x\in [0,1)$ and hence, we gave 
\[\Phi_1(x,r)\leq \Phi_1(1,r)=(1-r^k)\bigl(-1+3r^m+r^{2m}-r^{3m}\bigr)+2r^{3k}(1-r^m)(1+r^m)^2.\]
Thus $\mathcal{A}_F(z)\le1$ whenever $G_1(1,r)\le0$, which holds for
$r\le R_{1,m,k}$, where $R_{1,m,k}$ is minimal positive root in $(0,1)$ of
\begin{equation}\label{eq:R1}
(1-r^k)\bigl(-1+3r^m+r^{2m}-r^{3m}\bigr)+2r^{3k}(1-r^m)(1+r^m)^2=0.
\end{equation}

We now claim that $R_{1,m,k}<R_{1,m}$. Suppose, on the contrary that $R_{1,m,k}\geq R_{1,m}$. Then for any $r\geq R_{1,m}$, we have $r^{3m}-r^{2m}-3r^m+1\leq  0$ and hence 
\beas (1-r^k)\bigl(-1+3r^m+r^{2m}-r^{3m}\bigr)+2r^{3k}(1-r^m)(1+r^m)^2>0.\eeas
This contradicts $R_{1,m,k}$ roots of (\ref{eq:R1}). Therefore $R_{1,m,k}<R_{1,m}$.

\medskip

{\bf \underline{Sharpness of the Radius $R_{1,m,k}$}: }

We now show that the radius $R_{1,m,k}$ is sharp.
Fix $z_0\in\partial B_X$.
For $b\in(0,1)$, define
\begin{equation}\label{F1}
F_1(z)=f(l_{z_0}(z)),\qquad z\in B_X,
\end{equation}
where
\[
f(\zeta)=\frac{b+\zeta}{1+b\zeta},\qquad \zeta\in\mathbb{U},
\]
and $l_{z_0}\in T(z_0)$.
Let $\mu_m(z)=l_{z_0}(z)^{\,m-1}z$ for $m\geq 1$.
Then, for $r\in(0,1)$,
\[
F_1(rz_0)=\frac{b+r}{1+br}
= a+\sum_{s=1}^{\infty}P_s(rz_0),
\]
where  $P_s(z_0)=(-1)^sb^{\,s-1}(1-b^2)r^s, s\ge 1.$
Consequently,
\[
P_s(\mu_m(rz_0))
=-(-1)^sr^{ms}b^{\,s-1}(1-b^2).
\]
Moreover 
\[DF_1(z)(z)=\frac{(1-b^2)\,l_{z_0}(z)}{\bigl(1+b\,l_{z_0}(z)\bigr)^2}\;\;\text{and}\;\;D^2F_1(z)(z^2)=\frac{2b(1-b^2)\,[l_{z_0}(z)]^2}{\bigl(1+b\,l_{z_0}(z)\bigr)^3}.\]
Thus for $z=rz_0$, we have 
\begin{align*}
\mathcal{A}_{F_1}(z):=&|F_1(\mu_m(rz_0))|+|D F_1(\mu_m(rz_0))\mu_m(rz_0)|+\frac{1}{2!}|D^2 F_1(\mu_m(rz_0)) ((\mu_m(rz_0))^2)|\\
&+\sum_{j=3}^{\infty}|P_j(\mu_k(rz_0))|+\frac{|P_1(\mu_k(rz_0))|^2r^{k}}{1-r^k}+\left(\frac{1}{1+|b|}+\frac{r^k}{1-r^k}\right)\sum_{j=2}^{\infty}|P_j(\mu_k(rz_0))|^2 \\
=&\frac{b+r^m}{1+br^m}+\frac{(1-b^2)r^m}{(1+br^m)^2}+\frac{b(1-b^2)r^{2m}}{(1+br^m)^3}+\frac{(1-b^2)b^2r^{3k}}{1-br^k}+\frac{(1-b^2)^2r^{3k}}{1-r^k}\\
&+\left(\frac{1}{1+|b|}+\frac{r^k}{1-r^k}\right)\frac{(1-b^2)^2\,r^{2k}}{1-b^2r^{2k}}\\
&=1+\frac{(1-b)D_1(b,r)}{1+br^m}
\end{align*}
where
\beas
D_1(b,r)
&=&-(1-r^m)+\frac{(1+b)r^m}{1+br^m}+\frac{b(1+b)r^{2m}}{(1+br^m)^2}+\frac{(1+b)b^2(1+br^m)r^{3k}}{1-br^k}\\
&&+\frac{(1-b^2)(1+b)(1+br^m)r^{3k}}{1-r^k}+\left(\frac{1}{1+|b|}+\frac{r^k}{1-r^k}\right)\frac{(1-b^2)(1+b)(1+br^m)\,r^{2k}}{1-b^2r^{2k}}.
\eeas
Letting $b\to1^-$, we obtain
\beas
&&\lim_{b\to1^-}D_1(b,r)\\
&=&-(1-r^m)+\frac{2r^m}{1+r^m}+\frac{2r^{2m}}{(1+r^m)^2}+\frac{2(1+r^m)r^{3k}}{1-r^k}\\
&=&\frac{-(1-r^m)(1+r^m)^2(1-r^k)+2r^m(1+r^m)(1-r^k)+2r^{2m}(1-r^k)+2(1+r^m)^3r^{3k}}{(1+r^m)^2(1-r^k)}\\
&=&\frac{(1-r^k)\bigl(-1+3r^m+r^{2m}-r^{3m}\bigr)+2r^{3k}(1-r^m)(1+r^m)^2+4r^{m+3k}(1+r^m)^2}{(1+r^m)^2(1-r^k)}>0.\eeas
Hence, for $r>R_{1,m,k}$ and $b$ sufficiently close to $1$, we have
$\mathcal{A}_{F_1}(z)>1$.
This proves that the radius $R_{1,m,k}$ is best possible.
\end{proof}
\begin{rem}
Theorem \ref{T1} reduces to Theorem C in \cite{AA2023} when $B_X=\mathbb{U}$, $m=k=1$ and $\mu_1(z)=z$.
\end{rem}
In table \ref{tab3.1}, Figure \ref{f31} and \ref{f32}, we obtain the values of $R_{1,m,k}$ and $R_{1,m}$ for certian values of $m,k\in\mathbb{N}$.

\begin{table}[H]
\centering
\begin{minipage}[b]{0.45\textwidth}
\centering
\begin{tabular}{|c|c|c|c|}
\hline
\textbf{k} & \textbf{m} &\textbf{$R_{1,m,k}$} & \textbf{$R_{1,m}$} \\ 
\hline
1 & 1 & 0.287460& 0.311110 \\ 
\hline
2 & 2 & 0.536150& 0.557770 \\ 
\hline
2 & 3 & 0.624040 & 0.677600 \\ 
\hline
3 & 4 & 0.712960& 0.746840 \\ 
\hline
\end{tabular}
\caption{$R_{1,m}$ and $R_{1,m,k}$ are the smallest root of equations (\ref{R1m}) and(\ref{eq:R1}) respectively in $(0,1)$.}
\label{tab3.1}
\end{minipage}%
\hfill
\begin{minipage}[b]{0.55\textwidth}
\centering
\begin{figure}[H]
\centering
\includegraphics[width=\textwidth]{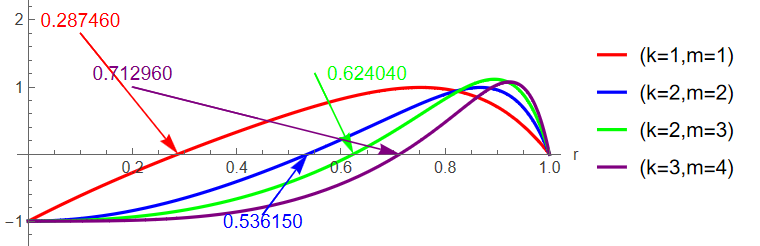}
\caption{The graphs exhibit the locations of the roots $R_{1,m,k}$ in $(0,1)$ for different values of $k,m$.}
\label{f31}
\end{figure}
\end{minipage}
\begin{minipage}[b]{0.55\textwidth}
\centering
\begin{figure}[H]
\centering
\includegraphics[width=\textwidth]{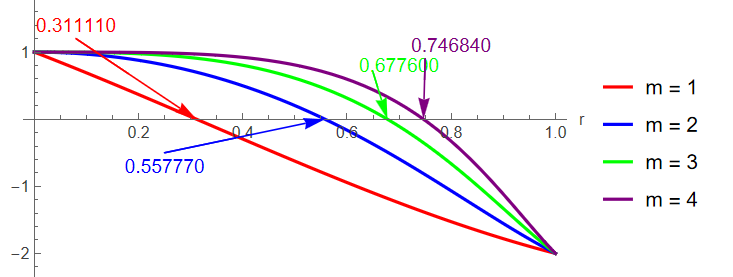}
\caption{The graphs exhibit the locations of the roots $R_{1,m}$ in $(0,1)$ for different value of $m$.}
\label{f32}
\end{figure}
\end{minipage}

\end{table}

\begin{theo}\label{T2}
Let $B_{X}$ be the unit ball of a complex Banach space $X$ also let $F$ be a holomorphic mapping from $B_{X}$ to $\ol{\mathbb{U}}$ with
\[
F(z)=a+\sum_{s=1}^{\infty}P_{s}(z), \qquad z\in B_{X},
\]
where $P_{s}(z)=\frac{1}{s!}D^{s}F(0)(z^{s})$ and   $\mu_k,\mu_m:B_{X}\to B_{X}$ are  Schwarz mappings having $z=0$ as a zero of order $k,m$ respectively. Then, for each $k,m\in\mathbb{N}$, the inequality
\bea\label{T2.2}&&\mathcal{A}_F(z,r):=\nonumber\\
&&|F(\mu_m(z))|^2+|D F(\mu_m(z))\mu_m(z)|+\frac{1}{2!}|D^2 F(\mu_m(z)) ((\mu_m(z))^2)|\nonumber\\
&&+\sum_{j=3}^{\infty} |P_j(\mu_k(z))| +\frac{|P_1(\mu_k(z))|^2r^{k}}{1-r^k}+\left(\frac{1}{1+|a|}+\frac{r^k}{1-r^k}\right)
\sum_{j=2}^{\infty} |P_j(\mu_k(z))|^2\leq 1,
\eea
for $\|z\|=r\leq R_{2,m,k}$, where $R_{2,m}\in(0,1)$ is the unique root of the equation $r^{3m}-r^{2m}-2r^m+1=0$
and $R_{2,m,k}\in(0,1)$ is the smallest positive root of the equation
\[
(1-r^k)\bigl(-1+3r^m+r^{2m}-r^{3m}\bigr)+2r^{3k}(1-r^m)(1+r^m)^2=0.
\]
Moreover, $\lim_{a\to 1^-} \mathcal{A}_F(z,r)=1$
for the function $F_2$ defined in (\ref{F2}).

\end{theo}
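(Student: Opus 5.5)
I will follow the proof of Theorem \ref{T1} closely; the only change is that $|F(\mu_m(z))|$ is replaced by $|F(\mu_m(z))|^2$. Fix $z\in B_X\setminus\{0\}$, put $z_0=z/\|z\|_X$, and consider the slice function $f(\zeta)=F(\zeta z_0)$. The same Schwarz--Pick type estimates \eqref{dd1} hold, namely the case $k=1$ and the case $k=2$ of Lemma \ref{L20}. Since $\|\mu_m(z)\|_X\le r^m$ and $t_2,t_3$ are increasing, the estimates \eqref{eq:est2}--\eqref{eq:est3} follow. Lemma \ref{L3} bounds the whole $\mu_k$-block by $(1-x^2)r^{3k}/(1-r^k)$, where $x=|a|$. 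The case $x=1$ is trivial, as before. Writing $t=|F(\mu_m(z))|$ and $\lambda=\frac{r^m}{(1-r^m)^2(1+r^m)}$, which is the simplified sum of the two derivative coefficients, I obtain
\[
\mathcal{A}_F(z,r)\le t^2+\lambda(1-t^2)+(1-x^2)\frac{r^{3k}}{1-r^k}.
\]

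The key step is to maximize $\theta_2(t)=t^2+\lambda(1-t^2)=\lambda+(1-\lambda)t^2$ over $0\le t\le t_0=\frac{x+r^m}{1+xr^m}$, where the bound $t\le t_0$ comes from Lemma \ref{L1}. This function is increasing in $t$ as soon as $\lambda\le 1$. The condition $\lambda\le1$ means $r^m\le(1-r^m)^2(1+r^m)$, that is, $r^{3m}-r^{2m}-2r^m+1\ge0$, so it holds exactly for $r\le R_{2,m}$. This is why $R_{2,m}$ replaces the half-threshold $R_{1,m}$ of Theorem \ref{T1}. For such $r$, substituting $t=t_0$ gives the bound
\[
G_2(x,r)=t_0^2+\frac{r^m(1-x^2)}{(1-r^m)(1+xr^m)^2}+(1-x^2)\frac{r^{3k}}{1-r^k}.
\]
Here I use $1-t_0^2=\frac{(1-x^2)(1-r^{2m})}{(1+xr^m)^2}$. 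Then
\[
1-G_2=(1-x^2)\left[\frac{1-r^{2m}}{(1+xr^m)^2}-\frac{r^m}{(1-r^m)(1+xr^m)^2}-\frac{r^{3k}}{1-r^k}\right].
\]
So $G_2\le1$ is equivalent to
\[
\Phi_2(x,r):=(1-r^k)\bigl(r^m-(1-r^m)(1-r^{2m})\bigr)+r^{3k}(1-r^m)(1+xr^m)^2\le0.
\]
$\Phi_2$ is increasing in $x$, so it suffices to check $x\to1$. Expanding $r^m-(1-r^m)^2(1+r^m)=-1+2r^m+r^{2m}-r^{3m}$ gives a condition of the same shape as in Theorem \ref{T1}.

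The main obstacle is that the stated defining equation for $R_{2,m,k}$ is literally identical to that of $R_{1,m,k}$, while my computation produces
\[
(1-r^k)(-1+2r^m+r^{2m}-r^{3m})+r^{3k}(1-r^m)(1+r^m)^2=0.
\]
For $m=k=1$ this reduces to $-1+2r+r^2+2r^4+r^5=0$, which matches Theorem D. I therefore expect the intended equation to be this one; the displayed equation looks like a typographical carryover from Theorem \ref{T1}. I would carry out the proof with my equation, and if the literal statement is meant, note that its root is smaller, so the inequality holds a fortiori. I also need $R_{2,m,k}<R_{2,m}$, so that the monotonicity step applies. The argument is as in Theorem \ref{T1}: for $r\ge R_{2,m}$ the first term is $\ge0$ and the second is $>0$, so no root can lie there.

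For the limiting statement, I take $F_2$ to be the analogue of $F_1$ in \eqref{F1}, with $f(\zeta)=\frac{b+\zeta}{1+b\zeta}$, $\mu_m(z)=l_{z_0}(z)^{m-1}z$, and $z=rz_0$. The explicit values of the derivatives and of $P_s$ from the previous proof are reused. Squaring the first term, I write $\mathcal{A}_{F_2}=1+(1-b)D_2(b,r)/(\cdots)$. Every term other than $t_0^2$ carries a factor $(1-b^2)$ or $(1-b^2)^2$, and $t_0\to1$ as $b\to1^-$. Hence $\mathcal{A}_{F_2}(z,r)\to1$, which is the claimed limit.
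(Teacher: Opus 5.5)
Your proposal is correct and follows the paper's proof essentially step for step: the same bound $t^2+\lambda(1-t^2)$ with the threshold $\lambda\le 1$ giving $R_{2,m}$, the same numerator $-(1-r^{2m})(1-r^m)(1-r^k)+r^m(1-r^k)+r^{3k}(1+xr^m)^2(1-r^m)$ increasing in $x$, and the same sign argument for $R_{2,m,k}<R_{2,m}$. You are also right about the typo: the paper's own proof defines $R_{2,m,k}$ by exactly your equation \eqref{eq:R2}, and the displayed equation in the statement is a carryover from Theorem \ref{T1}. The only mismatch is that the paper's $F_2$ in \eqref{F2} is built from $f(\zeta)=\frac{b-\zeta}{1-b\zeta}$ rather than $\frac{b+\zeta}{1+b\zeta}$, but your limit argument applies verbatim, since $\frac{b-r^m}{1-br^m}\to 1$ and every other term still carries a factor $1-b^2$.
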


\begin{proof}[{\bf Proof of Theorem \ref{T2}}] Assume that $|a|=x\in[0,1]$. If $x=1$, then $P_s(z_0)=0$ for all $s\ge1$, and
\eqref{T2.2} holds trivially. 

Hence, we assume $x\in[0,1)$. Using \eqref{eq:est1}- \eqref{eq:est3}, Lemma \ref{L1} and \ref{L3}, we obtain
\begin{align}
\mathcal{A}_F(z):=&|F(\mu_m(z))|^2+|D F(\mu_m(z))\mu_m(z)|+\frac{1}{2!}|D^2 F(\mu_m(z)) ((\mu_m(z))^2)|\nonumber\\
&+\sum_{j=3}^{\infty} |P_j(\mu_k(z))| +\frac{|P_1(\mu_k(z))|^2r^{k}}{1-r^k}
+\left(\frac{1}{1+|a|}+\frac{r^k}{1-r^k}\right)
\sum_{j=2}^{\infty} |P_j(\mu_k(z))|^2\nonumber\\ 
\le & |F(\mu_m(z))|^2+\frac{r^m}{1-r^{2m}}(1-|F(\mu_m(z))|^2)+\frac{r^{2m}(1+r^m)}{(1-r^{2m})^2}(1-|F(\mu_m(z))|^2)\nonumber\\
&+(1-x^2)\frac{r^{3k}}{1-r^k} \nonumber\\
=& \left(\frac{x+r^m}{1+xr^m}\right)^2+\left(\frac{r^m}{1-r^{2m}}+\frac{r^{2m}(1+r^m)}{(1-r^{2m})^2}\right)\bigg(1-\left(\frac{x+r^m}{1+xr^m}\right)^2\bigg)+(1-x^2)\frac{r^{3k}}{1-r^k}\nonumber\\
=& \left(\frac{x+r^m}{1+xr^m}\right)^2+\frac{r^m(1-x^2)}{(1-r^m)(1+xr^m)^2}+(1-x^2)\frac{r^{3k}}{1-r^k}\nonumber\\
=& 1+\frac{(1-x^2)G_2(x,r)}{(1-r^k)(1-r^m)(1+xr^m)^2},\label{eq:G2}
\end{align}
where $G_2(x,r)=-(1-r^{2m})(1-r^m)(1-r^k)+r^m(1-r^k)+r^{3k}(1+xr^m)^2(1-r^m)$.\\

The third inequality of Eq. (\ref{eq:G2}) holds for $r\in [0,1]$ satisfying $\frac{r^m}{(1-r^{m})^2(1+r^m)}\leq 1$, which is satisfied for $0\leq r\leq R_{2,m}$, where $R_{2,m}$ is the unique positive root of the equation $r^{3m}-r^{2m}-2r^m+1=0$.

Differentiating $G_2(x,r)$ with respect to $x$, we obtain
\[
\frac{\partial G_2(x,r)}{\partial x}
=2r^{3k+m}(1+xr^m)(1-r^m)\ge 0.
\]
Consequently, $G_2(x,r)$ is increasing in $x\in[0,1)$. Therefore,
\[
G_2(x,r)\le G_2(1,r)
=-(1-r^{2m})(1-r^m)(1-r^k)+r^m(1-r^k)+r^{3k}(1+r^m)^2(1-r^m).
\]
It follows that $\mathcal{A}_F(z)\le 1$ whenever $G_2(1,r)\le 0$. This condition is satisfied for  $r\le R_{2,m,k},$
where $R_{2,m,k}$ denotes the smallest positive root in $(0,1)$ of
\begin{equation}\label{eq:R2}
-(1-r^{2m})(1-r^m)(1-r^k)+r^m(1-r^k)+r^{3k}(1+r^m)^2(1-r^m)=0.
\end{equation}

Next, we show that $R_{2,m,k}<R_{2,m}$. Assume, to the contrary, that
$R_{2,m,k}\ge R_{2,m}$. Since $R_{2,m}$ is the unique positive root of
\bea\label{R2m}
r^{3m}-r^{2m}-2r^m+1=0,
\eea
we have
\[
-1+2r^m+r^{2m}-r^{3m}\ge 0
\qquad \text{for } r\ge R_{2,m}.
\]
Hence, for every $r\ge R_{2,m}$,
\beas
&&-(1-r^{2m})(1-r^m)(1-r^k)+r^m(1-r^k)+r^{3k}(1+r^m)^2(1-r^m)\\
&=&(1-r^k)\bigl(-1+2r^m+r^{2m}-r^{3m}\bigr)+r^{3k}(1-r^m)(1+r^m)^2> 0.
\eeas
This contradicts the fact that $R_{2,m,k}$ is a root of \eqref{eq:R2}. Therefore,
\[
R_{2,m,k}<R_{2,m}.
\]

{\bf \underline{Sharpness of the Radius $R_{2,m,k}$}: }

We now show that the radius $R_{2,m,k}$ is sharp. Fix $z_0\in\partial B_X$. For $b\in(0,1)$, define
\begin{equation}\label{F2}
F_2(z)=f(l_{z_0}(z)),\qquad z\in B_X,
\end{equation}
where
\[
f(\zeta)=\frac{b-\zeta}{1-b\zeta},\qquad \zeta\in\mathbb{U},
\]
and $l_{z_0}\in T(z_0)$. Let $\mu_m(z)=l_{z_0}(z)^{m-1}z, m\geq 1.$ Then, for $r\in(0,1)$,
\[
F_2(rz_0)=\frac{b-r}{1-br}
=b+\sum_{s=1}^{\infty}P_s(rz_0),
\]
where $P_s(z_0)=-(1-b^2)b^{s-1}, s\ge1.$ Consequently, $P_s(\mu_m(rz_0))=-(1-b^2)b^{s-1}r^{ms}.$

Moreover,
\[
DF_2(z)(z)
=\frac{(1-b^2)l_{z_0}(z)}
{\bigl(1-bl_{z_0}(z)\bigr)^2}, \;\text{and}\; D^2F_2(z)(z^2)
=\frac{2b(1-b^2)[l_{z_0}(z)]^2}
{\bigl(1-bl_{z_0}(z)\bigr)^3}.
\]

Thus for $z=rz_0$, we have 
\begin{align*}
\mathcal{A}_{F_2}(z,b):=&|F_2(\mu_m(rz_0))|^2+|D F_2(\mu_m(rz_0))\mu_m(rz_0)|+\frac{1}{2!}|D^2 F_2(\mu_m(rz_0)) ((\mu_m(rz_0))^2)|\\
&+\sum_{j=3}^{\infty}|P_j(\mu_k(rz_0))|+\frac{|P_1(\mu_k(rz_0))|^2r^{k}}{1-r^k}+\left(\frac{1}{1+|a|}+\frac{r^k}{1-r^k}\right)\sum_{j=2}^{\infty}|P_j(\mu_k(rz_0))|^2 \\
=&\left(\frac{b-r^m}{1-br^m}\right)^2+\frac{(1-b^2)r^m}{(1-br^m)^2}+\frac{b(1-b^2)r^{2m}}{(1-br^m)^3}+\frac{(1-b^2)b^2r^{3k}}{1-br^k}+\frac{(1-b^2)^2r^{3k}}{1-r^k}\\
&+\left(\frac{1}{1+|b|}+\frac{r^k}{1-r^k}\right)\frac{(1-b^2)^2\,r^{2k}}{1-b^2r^{2k}}\\
&=1+\frac{(1-b^2)D_2(b,r)}{(1-br^m)^2}
\end{align*}
where
\beas
D_2(b,r)
&=&-(1-r^{2m})+r^m+\frac{br^{2m}}{1-br^m}+\frac{b^2(1-br^m)^2r^{3k}}{1-br^k}\\
&&+\frac{(1-b^2)(1-br^m)^2r^{3k}}{1-r^k}+\left(\frac{1}{1+|b|}+\frac{r^k}{1-r^k}\right)\frac{(1-b^2)(1-br^m)^2\,r^{2k}}{1-b^2r^{2k}}
\eeas
Letting $b\to1^-$, we obtain
\beas
&&\lim_{b\to1^-}\mathcal{A}_{F_2}(z,r)=1.\eeas

\end{proof}
\begin{rem}
Theorem \ref{T2} reduces to Theorem D in \cite{AR2025} when $B_X=\mathbb{U}$, $m=k=1$ and $\mu_1(z)=z$.
\end{rem}
In table \ref{tab3.2}, Figure \ref{f33} and \ref{f34}, we obtain the values of $R_{2,m,k}$ and $R_{2,m}$ for certian values of $m,k\in\mathbb{N}$.

\begin{table}[H]
\centering
\begin{minipage}[b]{0.45\textwidth}
\centering
\begin{tabular}{|c|c|c|c|}
\hline
\textbf{k} & \textbf{m} &\textbf{$R_{2,m,k}$} & \textbf{$R_{2,m}$} \\ 
\hline
1 & 1 & 0.393730& 0.445040 \\ 
\hline
2 & 2 & 0.627480& 0.667110 \\ 
\hline
2 & 3 & 0.694260 & 0.763480 \\ 
\hline
3 & 5 & 0.794350& 0.850510 \\ 
\hline
\end{tabular}
\caption{$R_{2,m}$ and $R_{2,m,k}$ are the smallest root of equations (\ref{R2m}) and(\ref{eq:R2}) respectively in $(0,1)$.}
\label{tab3.2}
\end{minipage}%
\hfill
\begin{minipage}[b]{0.55\textwidth}
\centering
\begin{figure}[H]
\centering
\includegraphics[width=\textwidth]{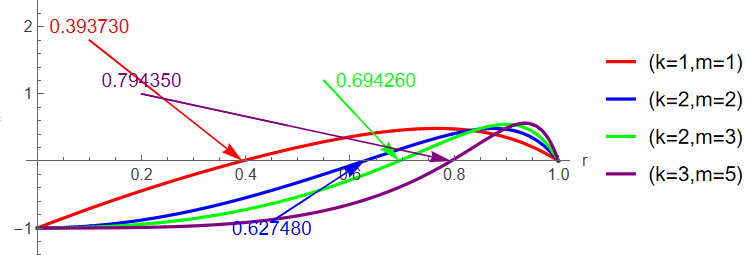}
\caption{The graphs exhibit the locations of the roots $R_{2,m,k}$ in $(0,1)$ for different values of $k,m$.}
\label{f33}
\end{figure}
\end{minipage}
\begin{minipage}[b]{0.55\textwidth}
\centering
\begin{figure}[H]
\centering
\includegraphics[width=\textwidth]{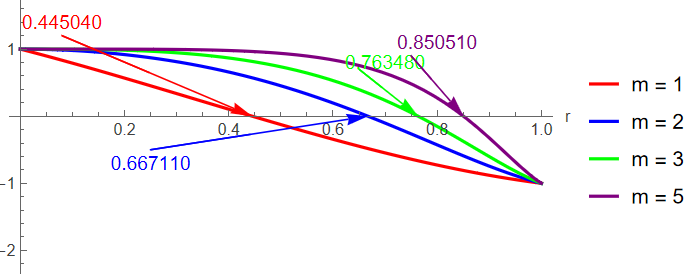}
\caption{The graphs exhibit the locations of the roots $R_{2,m}$ in $(0,1)$ for different value of $m$.}
\label{f34}
\end{figure}
\end{minipage}

\end{table}

\begin{theo}\label{T3}
Let $B_{X}$ be the unit ball of a complex Banach space $X$ also let $F$ be a holomorphic mapping from $B_{X}$ to $\ol{\mathbb{U}}$ with
\[
F(z)=a+\sum_{s=1}^{\infty}P_{s}(z), \qquad z\in B_{X},
\]
where $P_{s}(z)=\frac{1}{s!}D^{s}F(0)(z^{s})$ and   $\mu_m:B_{X}\to B_{X}$ is a Schwarz mappings having $z=0$ as a zero of order $m$. Then, for each $N,m\in\mathbb{N}$, the inequality
\bea\label{T2.3}&&|F(\mu_m(z))|+\sum_{k=N}^{\infty}\bigg|\frac{D^kF(\mu_m(z)) ((\mu_m(z))^k)}{k!}\bigg|\leq 1\
\eea
for $\|z\|=r\leq R_{3,m,N}$, where  $R_{3,m,N}\in(0,1)$ is the smallest positive root of the equation
\[
2r^{mN}-(1-r^{m})^{N-1}(1+r^m)(1-2r^m)=0.
\]
The number $R_{3,m,N}$ cannot be improved.
\end{theo}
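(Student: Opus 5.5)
Fix $z\in B_X\setminus\{0\}$, put $z_0=z/\|z\|_X$, and reduce to the disk. The natural slice is $w=\mu_m(z)$: if $\mu_m(z)=0$ the left side of (\ref{T2.3}) is $|a|\le 1$, so assume $w\neq 0$. Set $w_0=w/\|w\|_X$ and $g(\zeta)=F(\zeta w_0)$, $\zeta\in\mathbb{U}$, a holomorphic self-map of $\ol{\mathbb{U}}$; if $|g|$ attains $1$, then $g$ is constant and the sum vanishes, so assume $g:\mathbb{U}\to\mathbb{U}$. With $\rho=\|w\|_X\le r^m$ by (\ref{eq:Schwarz}), the chain rule gives $g^{(k)}(\rho)=D^kF(w)(w_0^k)$, hence
\[
\frac{|D^kF(w)(w^k)|}{k!}=\frac{|g^{(k)}(\rho)|}{k!}\rho^k .
\]
Lemma \ref{L20} then yields
\[
\frac{|D^kF(w)(w^k)|}{k!}\le (1-|g(\rho)|^2)\frac{(1+\rho)^{k-1}\rho^k}{(1-\rho^2)^k}=\frac{1-|F(w)|^2}{1+\rho}\Big(\frac{\rho}{1-\rho}\Big)^k .
\]
Summing over $k\ge N$ (convergent for $\rho<1/2$) gives
\[
|F(w)|+\sum_{k\ge N}\frac{|D^kF(w)(w^k)|}{k!}\le t+(1-t^2)\lambda(\rho),\qquad t=|F(w)|,
\]
where
\[
\lambda(\rho)=\frac{1}{1+\rho}\Big(\frac{\rho}{1-\rho}\Big)^{N}\frac{1-\rho}{1-2\rho}=\frac{\rho^N}{(1+\rho)(1-\rho)^{N-1}(1-2\rho)}.
\]

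Next, maximize over $t\in[0,1]$. The function $\theta(t)=t+\lambda(1-t^2)$ is increasing on $[0,1]$ when $\lambda\le 1/2$, so $\theta(t)\le\theta(1)=1$. Since $\lambda$ is increasing in $\rho$ on $[0,1/2)$ (numerator increasing, denominator decreasing for $\rho<1/2$ after a short check) and $\rho\le r^m$, it suffices that $\lambda(r^m)\le 1/2$. This condition is exactly
\[
2r^{mN}\le (1-r^m)^{N-1}(1+r^m)(1-2r^m),
\]
which holds for $r\le R_{3,m,N}$, because the difference is positive at $r=0$ and $R_{3,m,N}$ is its first root (this also forces $r^m<1/2$). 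Unlike Theorem \ref{T1}, no bound on $|F(w)|$ via Lemma \ref{L1} is needed here, since $\theta\le 1$ for every admissible $t$.

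For sharpness, take $F(z)=f(l_{z_0}(z))$ with $f(\zeta)=\frac{b-\zeta}{1-b\zeta}$ and $\mu_m(z)=l_{z_0}(z)^{m-1}z$, evaluated at $z=-rz_0$ or with a sign adjusted so that $\mu_m(z)=\pm r^m z_0$ gives the growth direction. Compute along the line: $\frac{f^{(k)}(\zeta)}{k!}=-(1-b^2)b^{k-1}(1-b\zeta)^{-k-1}$, so at $\zeta=\rho=r^m$ we get
\[
|f(\rho)|+\sum_{k\ge N}\frac{|f^{(k)}(\rho)|}{k!}\rho^k=\frac{b-\rho}{1-b\rho}+\frac{1-b^2}{1-b\rho}\cdot\frac{b^{N-1}\rho^N}{(1-b\rho)^{N-1}\big(1-b\rho-b\rho\big)}.
\]
Writing this as $1+(1-b)\,E(b,\rho)$ and letting $b\to1^-$, $E$ tends to
\[
-\frac{1+\rho}{1-\rho}+\frac{2\rho^N}{(1-\rho)^N(1-2\rho)},
\]
which is positive exactly when $2\rho^N>(1-\rho)^{N-1}(1+\rho)(1-2\rho)$. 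Hence for $r>R_{3,m,N}$ (slightly larger), the sum exceeds $1$ for $b$ near $1$.

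The main obstacle is the sign/direction bookkeeping in this example. The realized value at $\zeta=r^m$ must make $|f(\rho)|$ close to $1$ in the right direction, and the interplay of $\mu_m$ with $l_{z_0}$ must produce the correct point. I would handle this by checking the $m=1$, $X=\mathbb{C}$ case against Theorem A first.
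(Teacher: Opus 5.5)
Your proof is correct. Its skeleton is the paper's: slicing to the disk, Lemma~\ref{L20}, summing the geometric series to get $t+\lambda(1-t^2)$ with $\lambda=\frac{\rho^N}{(1+\rho)(1-\rho)^{N-1}(1-2\rho)}$, and the extremal $F_2$ with $\mu_m(z)=l_{z_0}(z)^{m-1}z$.

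The one real difference is how you close the upper bound. The paper first bounds $|F(\mu_m(z))|\le t_0=\frac{x+r^m}{1+xr^m}$ by Lemma~\ref{L1}. It then uses that $\theta(t)=t+\lambda(1-t^2)$ is increasing to pass to $\theta(t_0)=1+\frac{(1-x)G_3(x,r)}{1+xr^m}$, and finally proves $G_3(x,r)\le G_3(1,r)$. You instead note that $\lambda\le\frac12$ already gives $\theta(t)\le\theta(1)=1$ for every $t\in[0,1]$.

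Since $G_3(1,r)=(1-r^m)\bigl(2\lambda(r^m)-1\bigr)$, the two conditions are identical. Your version therefore gives the same radius while dispensing with Lemma~\ref{L1} and the monotonicity in $x=|a|$. The paper's detour buys only an $|a|$-dependent intermediate estimate and a proof parallel to Theorems~\ref{T1} and~\ref{T2}, where Lemma~\ref{L1} is genuinely needed because of the extra $(1-|a|^2)$ terms. Slicing through $w=\mu_m(z)$ instead of $z$, and checking monotonicity of the summed $\lambda$ rather than term by term, are immaterial variations.

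The bookkeeping you flagged is resolved by taking $z=rz_0$, as the paper does. Then $\mu_m(rz_0)=l_{z_0}(rz_0)^{m-1}rz_0=r^mz_0$ and $l_{z_0}(\mu_m(rz_0))=r^m$, which is exactly the point $\zeta=\rho=r^m$ where you computed, so no sign adjustment is needed.

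Drop the alternative $z=-rz_0$. For odd $m$ it lands at $\zeta=-r^m$, where the same computation gives, as $b\to1^-$, only the bracket $-(1-\rho)+\frac{2\rho^N}{(1+\rho)^{N-1}}$. That leads to a non-sharp threshold.

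Finally, your ``slightly larger'' is justified. The function $h(\rho)=(1-\rho)^{N-1}(1+\rho)(1-2\rho)-2\rho^N$ is strictly decreasing on $[0,\frac12]$, so it genuinely changes sign at $\rho=R_{3,m,N}^m$.
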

\begin{proof}[{\bf Proof of the Theorem \ref{T3} }]
Let $z\in B_X\setminus\{0\}$ be fixed and define $z_0=\frac{z}{\|z\|_X}\in \partial B_X.$
We consider the holomorphic function $f: \mathbb{U} \to \overline{\mathbb{U}}$ defined by
\[
f(\zeta) = F(\zeta z_0), \qquad \zeta \in \mathbb{U}
\]
 Then 
\begin{equation}\label{te31}
f(\zeta)=a_j+\sum_{s=1}^{\infty}(P_s)(z_0)\zeta^s,
\qquad \zeta\in\mathbb{U}.
\end{equation}
Now differentiating $k-$ times Eq. (\ref{te31}), we get 
\[f^{(k)}(\zeta)=D^kF(\zeta z_0)(z_0^k).\]
Then by  Lemma\ref{L20} (Schwarz-Pick Lemma), we can deduce that
\[ |D^kF(\zeta z_0)(z_0^k)|=|f^{(k)}(\zeta)|\leq \frac{k!(1-|f(\zeta)|^2)}{(1-|\zeta|^2)^k}(1+|\zeta|)^{k-1}.\]
Setting $\zeta=\|z\|_X=r<1$ then above equation Eqs. become 
\bea\label{te32} |D^kF(z) (z^k)|\leq \frac{k!(1-|F(z)|^2)}{(1-\|z\|^2)^k}(1+\|z\|)^{k-1}\|z\|^k.\eea
Let $t_3(x)=\frac{(1+x)^{k-1}}{(1-x^2)^k}, 0\le x\le x_0<1,$ where $k\ge 2$. A straightforward computation yields $t_3'(x)=\frac{x^{k-1}\bigl(k+(k-1)x+x^2\bigr)}{(1-x)^{k+1}(1+x)^2}.$
Since $n\ge 2$ and $0\le x<1$, it follows that $t_3'(x)\ge 0.$
Hence $t_3(x)$ is an increasing function on $[0,1]$. Since $\|\mu_m(z)\|_X\le \|z\|_X^{\,m}\leq 1$, then we get 
\[t_3(\|\mu_m(z)\|_X)\leq t_3(\|z\|_X^{\,m})\]
i.e., 
\[
\frac{\bigl(1+\|\mu_m(z)\|_X\bigr)^{k-1}}
     {\bigl(1-\|\mu_m(z)\|_X^2\bigr)^k}|\|\mu_m(z)\|^k
\le
\frac{\bigl(1+\|z\|_X^{\,m}\bigr)^{k-1}}
     {\bigl(1-\|z\|_X^{\,2m}\bigr)^k}\|z\|_X^{\,km}\;\text{for all}\;k\geq 1
\]
Now from (\ref{te32}), we deduce that 
\bea\label{te331}|D^kF(\mu_m(z)) ((\mu_m(z))^k)|&\leq&  \frac{k!(1-|F(\mu_m(z)|^2)\bigl(1+|\mu_m(z)|\bigr)^{k-1}}{\bigl(1-|\mu_m(z)|^2\bigr)^k}\nonumber\\
&\leq& \frac{k!(1-|F(\mu_m(z)|^2)\bigl(1+\|z\|_X^{\,m}\bigr)^{k-1}}{\bigl(1-\|z\|_X^{\,2m}\bigr)^k}\|z\|_X^{\,km}.\eea

Assume that $|a|=x\in[0,1]$. If $x=1$, then $P_s(z_0)=0$ for all $s\ge1$, and
\eqref{T2.3} holds trivially. 

Hence, we assume $x\in[0,1)$. Using \eqref{eq:Schwarz} and \eqref{te331}, we obtain
\bea\label{te33} && |F(\mu_m(z))|+\sum_{k=N}^{\infty}\bigg|\frac{D^kF(\mu_m(z)) ((\mu_m(z))^k)}{k!}\bigg|\nonumber\\
&\leq &|F(\mu_m(z))| +\sum_{k=N}^{\infty}\frac{(1-|F(\mu_m(z)|^2)\bigl(1+\|z\|_X^{\,m}\bigr)^{k-1}}{\bigl(1-\|z\|_X^{\,2m}\bigr)^k}\|z\|_X^{\,km}\nonumber\\
&=&|F(\mu_m(z))|+(1-|F(\mu_m(z)|^2)\sum_{k=N}^{\infty}\frac{(1+r^m)^{k-1}}{(1-r^{2m})^k}r^{mk}\nonumber\\
&=&|F(\mu_m(z))|+(1-|F(\mu_m(z)|^2)\sum_{k=N}^{\infty}\frac{r^{mk}}{(1-r^{m})^k(1+r^m)}\nonumber\\
&=&|F(\mu_m(z))|+\frac{r^{mN}}{(1-r^{m})^{N-1}(1+r^m)(1-2r^m)}\Bigl(1-|F(\mu_m(z))|^2\Bigr) \nonumber\\
&\le & \frac{x+r^m}{1+xr^m}+\frac{r^{mN}}{(1-r^{m})^{N-1}(1+r^m)(1-2r^m)}\left(1-\left(\frac{x+r^m}{1+xr^m}\right)^2
\right) \nonumber\nonumber\\
&=&  1+\frac{(1-x)G_3(x,r)}{1+xr^m},
\eea
where $G_3(x,r)=\frac{(1+x)r^{mN}(1-r^{2m})}{(1-r^{m})^{N-1}(1+r^m)(1-2r^m)(1+xr^m)}-(1-r^m).$ The fifth inequality holds for those $r\in[0,1]$ satisfying
$0\le\frac{r^{2mN}}{(1-r^{m})^{N-1}(1+r^m)(1-2r^m)}\le \frac{1}{2},$
that is, for \(r\in[0,r_{3,m,N}]\), where \(r_{3,m,N}\in(0,1)\) is the smallest positive root of
\[
2r^{mN}-(1-r^{m})^{N-1}(1+r^m)(1-2r^m)=0.
\]

Differentiating \(G_3(x,r)\) with respect to \(x\), we obtain
\[
\frac{\partial G_3(x,r)}{\partial x}
=
\frac{(1-r^m)r^{2m}}
{(1-2r^m)(1+xr^m)^2}
\ge 0.
\]
Hence \(G_3(x,r)\) is increasing on \(x\in[0,1]\), and therefore
\[
G_3(x,r)\le G_3(1,r)
=(1-r^m)\left[\frac{2r^{Nm}}{(1-2r^m)(1-r^m)^{N-1}(1+r^m)}-1\right].
\]

Consequently, $G_3(x,r)\le 0$ for \(r\le R_{3,m,N}\), where \(R_{3,m,N}\in(0,1)\) is the smallest positive root of
\begin{equation}\label{eq:R3mk}
2r^{mN}-(1-r^{m})^{N-1}(1+r^m)(1-2r^m)=0.
\end{equation}
We see that $r_{3,m,N}=R_{3,m,N}$.

{\bf \underline{Sharpness of the Radius $R_{3,m,N}$}: }

We now show that the radius $R_{3,m}$ is sharp.  We consider the function $F_2$ defined by (\ref{F2}). Differentiating $k-$ times, we get
\[
 D^kF_2(z)(z^k)
=\frac{k!b^{k-1}(1-b^2)[l_{z_0}(z)]^k}
{\bigl(1-bl_{z_0}(z)\bigr)^{k+1}}.
\]
 Let $\mu_m(z)=l_{z_0}(z)^{m-1}z, m\geq 1.$ Thus for $z=rz_0$, we have 

\begin{align*} & |F(\mu_m(z))|+\sum_{k=N}^{\infty}\bigg|\frac{D^kF(\mu_m(z)) ((\mu_m(z))^k)|}{k!}\bigg|\nonumber\\
=&\frac{b-r^m}{1-br^m}+\frac{(1-b^2)}{b(1-br^m)}\sum_{n=N}^{\infty}\left(\frac{br^m}{1-br^m}\right)^n\nonumber\\
&=1+\frac{(1-b)D_3(b,r)}{1-br^m},
\end{align*}
where $D_3(b,r)=-(1+r^m)+\frac{b^{N-1}(1+b)r^{mN}}{(1-br^m)^{N-1}(1-2br^m)}.$

It is evident that
\[
\lim_{b\to 1^-}D_3(b,r)
=-(1+r^m)+\frac{2r^{mN}}{(1-r^m)^{N-1}(1-2r^m)}>0,
\]
for \(r>R_{3,m}\), where \(R_{3,m}\) is the smallest positive root of the equation
\begin{equation}\label{eq:R3mk}
2r^{mN}-(1-r^m)^{N-1}(1+r^{m})(1-2r^m)=0.
\end{equation}

\end{proof}
\begin{rem} Theorem \ref{T3} yields several interesting special cases, which are discussed below together with a number of useful observations.
\begin{enumerate} 
\item[(i)] Theorem \ref{T3} reduces to Theorem A in \cite{LSX2018} when $B_X=\mathbb{U}$, $m=1$ and $\mu_1(z)=z$.
\item[(i)] Theorem~\ref{T3} recovers Theorem~3.4 of \cite{ABM2026} as the case $N=2$, $B_X=\mathbb{U}$, $m=1$, and $\mu_1(z)=z$. Thus, our result may be viewed as an extension of \cite[Theorem 3.4]{ABM2026} from the second-order case to arbitrary order $N\ge2$.
\end{enumerate} 
\end{rem}
In table \ref{tab3.3}, Figure \ref{f35}, we obtain the values of $R_{3,m,N}$ for certain values of $m,N\in\mathbb{N}$.

\begin{table}[H]
\centering
\begin{minipage}[b]{0.45\textwidth}
\centering
\begin{tabular}{|c|c|c|}
\hline
\textbf{m} & \textbf{N} &\textbf{$R_{3,m,N}$}  \\ 
\hline
1 & 2 & 0.355420\\ 
\hline
2 & 3 & 0.622760 \\ 
\hline
3 & 4 & 0.740940 \\ 
\hline
4 & 5 & 0.804770 \\ 
\hline
\end{tabular}
\caption{$R_{3,m,N}$ is the smallest root of equation (\ref{eq:R3mk}) in $(0,1)$.}
\label{tab3.3}
\end{minipage}%
\hfill
\begin{minipage}[b]{0.55\textwidth}
\centering
\begin{figure}[H]
\centering
\includegraphics[width=\textwidth]{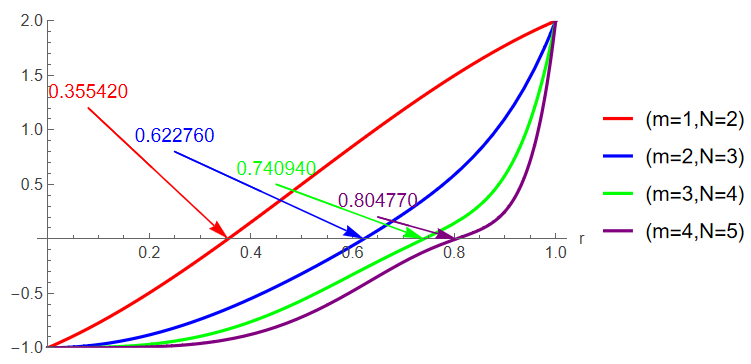}
\caption{The graphs exhibit the locations of the roots $R_{3,m,N}$ in $(0,1)$ for different value of $m,N$.}
\label{f35}
\end{figure}
\end{minipage}

\end{table}
\begin{theo}\label{T4}
Let $B_{X}$ be the unit ball of a complex Banach space $X$ also let $F$ be a holomorphic mapping from $B_{X}$ to $\ol{\mathbb{U}}$ with
\[
F(z)=a+\sum_{s=1}^{\infty}P_{s}(z), \qquad z\in B_{X},
\]
where $P_{s}(z)=\frac{1}{s!}D^{s}F(0)(z^{s})$ and   $\mu_m:B_{X}\to B_{X}$ is  Schwarz mappings having $z=0$ as a zero of order $m$. Then, for each $N,m\in\mathbb{N}$, the inequality
\bea\label{T2.4}&&|F(\mu_m(z))|^2+\sum_{k=N}^{\infty}\bigg|\frac{D^kF(\mu_m(z)) ((\mu_m(z))^k)}{k!}\bigg|\leq 1\
\eea
for $\|z\|=r\leq R_{4,m,N}$, where  $R_{4,m,N}\in(0,1)$ is the smallest positive root of the equation
\[
r^{mN}-(1-r^{m})^{N-1}(1+r^m)(1-2r^m)=0.
\]
The number $R_{4,m,N}$ cannot be improved.
\end{theo}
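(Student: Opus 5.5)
The plan is to follow the proof of Theorem \ref{T3} almost verbatim, replacing $|F(\mu_m(z))|$ by its square. Fix $z\in B_X\setminus\{0\}$, put $z_0=z/\|z\|_X$ and $f(\zeta)=F(\zeta z_0)$. Since $f^{(k)}(\zeta)=D^kF(\zeta z_0)(z_0^k)$, Lemma \ref{L20} together with the monotonicity of $t_3$ and $\|\mu_m(z)\|_X\le r^m$ gives the bound \eqref{te331}. Summing the geometric series exactly as in \eqref{te33}, with $t=|F(\mu_m(z))|$ and $\rho=r^m$, I will reduce to
\[
|F(\mu_m(z))|^2+\sum_{k=N}^{\infty}\bigg|\frac{D^kF(\mu_m(z))((\mu_m(z))^k)}{k!}\bigg|\le t^2+\Lambda(\rho)(1-t^2),
\qquad \Lambda(\rho)=\frac{\rho^{N}}{(1-\rho)^{N-1}(1+\rho)(1-2\rho)},
\]
valid for $\rho<1/2$.

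The key simplification compared with Theorem \ref{T3} is that the right-hand side equals $1-(1-\Lambda)(1-t^2)$. Hence it is at most $1$ for every $t\in[0,1]$ exactly when $\Lambda(\rho)\le 1$. This means I do not need Lemma \ref{L1}, nor any monotonicity argument in $x=|a|$, nor the restriction coming from $\theta(t)=t+\lambda(1-t^2)$. The condition $\Lambda(r^m)\le1$ reads
\[
r^{mN}\le (1-r^m)^{N-1}(1+r^m)(1-2r^m).
\]
The left side increases from $0$ and the right side decreases from $1$ to $0$ on $\rho\in[0,1/2]$. So this holds precisely for $r\le R_{4,m,N}$, and the root automatically satisfies $r^m<1/2$, so the geometric series converges. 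The trivial case $|a|=1$ is handled as before.

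For sharpness I will use $F_2$ from \eqref{F2} with $\mu_m(z)=l_{z_0}(z)^{m-1}z$ and $z=rz_0$. The formula for $D^kF_2$ gives
\[
\left(\frac{b-r^m}{1-br^m}\right)^2+\frac{1-b^2}{b(1-br^m)}\sum_{k\ge N}\Bigl(\frac{br^m}{1-br^m}\Bigr)^k .
\]
The first term equals $1-\frac{(1-b^2)(1-r^{2m})}{(1-br^m)^2}$, so the whole expression becomes $1+(1-b^2)E(b,r)$. As $b\to1^-$,
\[
E(b,r)\to \frac{1}{1-r^m}\left[\frac{r^{mN}}{(1-r^m)^{N-1}(1-2r^m)}-(1+r^m)\right],
\]
which is positive exactly when $\Lambda(r^m)>1$, that is, for $r$ slightly larger than $R_{4,m,N}$. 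So for $b$ close to $1$ the sum exceeds $1$.

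The main obstacle is minor: making the algebra in the extremal computation transparent. In particular, one must check that the negative part $-(1-r^{2m})/(1-br^m)^2$ and the series part combine with the same factor $(1-b^2)$, so that the sign of the limit is governed precisely by $\Lambda(r^m)-1$. One must also handle $r^m$ near $1/2$, where the series diverges and the inequality fails trivially.
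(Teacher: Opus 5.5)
Your proposal is correct. The derivative estimate (Lemma \ref{L20}, monotonicity in $\|\mu_m(z)\|_X$, and the geometric sum) and the sharpness computation with $F_2$ and $\mu_m(z)=l_{z_0}(z)^{m-1}z$ coincide with the paper's. In particular, your $E(b,r)$ is exactly the paper's $D_4(b,r)/(1-br^m)^2$.

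The difference is in the middle step. Following the template of Theorems \ref{T1}--\ref{T3}, the paper first replaces $t=|F(\mu_m(z))|$ by the upper bound $\frac{x+r^m}{1+xr^m}$ from Lemma \ref{L1}. That step requires $t\mapsto t^2+\Lambda(1-t^2)$ to be nondecreasing, hence $\Lambda\le 1$. The paper then studies $G_4(x,r)$ in $x=|a|$, only to find $\partial G_4/\partial x=0$.

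Your identity $t^2+\Lambda(1-t^2)=1-(1-\Lambda)(1-t^2)$ explains why the $x$-dependence disappears: the size of $|F(\mu_m(z))|$ plays no role. The single condition $\Lambda(r^m)\le 1$ is both what the paper's monotonicity step needs and what the final inequality needs. So your route dispenses with Lemma \ref{L1} and the case analysis in $x$ altogether.

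Your remark that $\rho^N$ increases while $(1-\rho)^{N-1}(1+\rho)(1-2\rho)$ decreases from $1$ to $0$ on $[0,1/2]$ also fills in what the paper leaves implicit in ``we see that $r_{4,m,N}=R_{4,m,N}$''. It shows that the inequality holds exactly for $r\le R_{4,m,N}$, that the root is unique in $(0,1)$, and that $R_{4,m,N}^m<1/2$, so the series converges. The paper's route gains only uniformity of presentation with its other theorems.
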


\begin{proof}[{\bf Proof of the Theorem \ref{T4} }]
Using similar argument as in the proof of Theorems \ref{T1} and \ref{T2}, and in view of Lemma \ref{L20}, we obtain (\ref{te33}). Therefore
\bea\label{te33} && |F(\mu_m(z))|^2+\sum_{k=N}^{\infty}\bigg|\frac{D^kF(\mu_m(z)) ((\mu_m(z))^k)}{k!}\bigg|\nonumber\\
&\leq &|F(\mu_m(z))|^2 +\sum_{k=N}^{\infty}\frac{(1-|F(\mu_m(z)|^2)\bigl(1+\|z\|_X^{\,m}\bigr)^{k-1}}{\bigl(1-\|z\|_X^{\,2m}\bigr)^k}\|z\|_X^{\,km}\nonumber\\
&=&|F(\mu_m(z))|^2+(1-|F(\mu_m(z)|^2)\sum_{k=N}^{\infty}\frac{(1+r^m)^{k-1}}{(1-r^{2m})^k}r^{mk}\nonumber\\
&=&|F(\mu_m(z))|^2+(1-|F(\mu_m(z)|^2)\sum_{k=N}^{\infty}\frac{r^{mk}}{(1-r^{m})^k(1+r^m)}\nonumber\\
&=&|F(\mu_m(z))|^2+\frac{r^{mN}}{(1-r^{m})^{N-1}(1+r^m)(1-2r^m)}\Bigl(1-|F(\mu_m(z))|^2\Bigr) \nonumber\\
&\le & \left(\frac{x+r^m}{1+xr^m}\right)^2+\frac{r^{mN}}{(1-r^{m})^{N-1}(1+r^m)(1-2r^m)}\left(1-\left(\frac{x+r^m}{1+xr^m}\right)^2
\right) \nonumber\nonumber\\
&=&  1+\frac{(1-x^2)(1-r^{2m})G_4(x,r)}{(1+xr^m)^2},
\eea
where $G_4(x,r)=\frac{r^{mN}}{(1-r^{m})^{N-1}(1+r^m)(1-2r^m)}-1$. The fifth inequality holds for those $r\in[0,1]$ satisfying
$0\le\frac{r^{mN}}{(1-r^{m})^{N-1}(1+r^m)(1-2r^m)}\le 1,$
that is, for \(r\in[0,r_{4,m,N}]\), where \(r_{4,m,N}\in(0,1)\) is the smallest positive root of
\[
r^{mN}-(1-r^{m})^{N-1}(1+r^m)(1-2r^m)=0,
\]

Differentiating \(G_4(x,r)\) with respect to \(x\), we obtain
\[
\frac{\partial G_4(x,r)}{\partial x}
= 0.
\]
Hence \(G_4(x,r)\) is constant on \(x\in[0,1]\), and therefore
\[
G_4(x,r)= G_4(1,r)
=
\frac{r^{mN}}{(1-r^{m})^{N-1}(1+r^m)(1-2r^m)}-1.
\]

Consequently, $G_4(x,r)\le 0$ for \(r\le R_{4,m,N}\), where \(R_{4,m,N}\in(0,1)\) is the smallest positive root of
\begin{equation}\label{eq:R4mk}
r^{mN}-(1-r^{m})^{N-1}(1+r^m)(1-2r^m)=0.
\end{equation}
We see that $r_{4,m,N}=R_{4,m,N}$.

{\bf \underline{Sharpness of the Radius $R_{4,m}$}: }

We now show that the radius $R_{4,m}$ is sharp.  We consider the function $F_2$ defined by (\ref{F2}). Differentiating $k-$ times, we get
\[
 D^kF_2(z)(z^k)
=\frac{k!b^{k-1}(1-b^2)[l_{z_0}(z)]^k}
{\bigl(1-bl_{z_0}(z)\bigr)^{k+1}}.
\]
 Let $\mu_m(z)=l_{z_0}(z)^{m-1}z, m\geq 1.$ Thus for $z=rz_0$, we have 

\begin{align*} & |F(\mu_m(z))|^2+\sum_{k=N}^{\infty}\bigg|\frac{D^kF(\mu_m(z)) ((\mu_m(z))^k)|}{k!}\bigg|\nonumber\\
=&\left(\frac{b-r^m}{1-br^m}\right)^2+\frac{(1-b^2)}{b(1-br^m)}\sum_{n=N}^{\infty}\left(\frac{br^m}{1-br^m}\right)^n\nonumber\\
=&\left(\frac{b-r^m}{1-br^m}\right)^2+\frac{(1-b^2)b^{N-1}r^{mN}}{(1-2br^m)(1-br^m)^N}\nonumber\\
&=1+\frac{(1-b^2)D_4(b,r)}{(1-br^m)^2},
\end{align*}
where $D_4(b,r)=-(1-r^{2m})+\frac{b^{N-1}r^{mN}}{(1-2br^m)(1-br^m)^{N-2}}.$

It is evident that
\[
\lim_{b\to 1^-}D_4(b,r)
=-(1-r^{2m})+\frac{r^{mN}}{(1-2r^m)(1-r^m)^{N-2}}=\frac{r^{mN}-(1-2r^m)(1-r^m)^{N-1}(1+r^m)}{(1-2r^m)(1-r^m)^{N-2}}>0,
\]
for \(r>R_{4,m,N}\), where \(R_{4,m,N}\) is the smallest positive root of the equation
\[
r^{mN}-(1-r^{m})^{N-1}(1+r^m)(1-2r^m)=0.
\]

\end{proof}
\begin{rem} Theorem \ref{T4} yields several interesting special cases, which are discussed below together with a number of useful observations.
\begin{enumerate}
\item[(i)] Theorem \ref{T4} reduces to Theorem B in \cite{LSX2018} when $B_X=\mathbb{U}$, $m=1$ and $\mu_1(z)=z$.
\item[(ii)]  It is worth noting that Theorem~\ref{T4} extends Theorem~3.6 of \cite{ABM2026} in the direction of higher-order derivative Bohr inequalities. Indeed, when $N=2$, $B_X=\mathbb{U}$, $m=1$, and $\mu_1(z)=z$, Theorem~\ref{T4} reduces to Theorem~3.6 of \cite{ABM2026}. However, Theorem~\ref{T4} remains valid for every integer $N\ge2$, thereby providing a family of sharp Bohr-type inequalities of arbitrary order.
\end{enumerate}\end{rem} 
In table \ref{tab3.4}, Figure \ref{f41}, we obtain the values of $R_{4,m,N}$ for certain values of $m,N\in\mathbb{N}$.

\begin{table}[H]
\centering
\begin{minipage}[b]{0.45\textwidth}
\centering
\begin{tabular}{|c|c|c|}
\hline
\textbf{m} & \textbf{N} &\textbf{$R_{4,m,N}$}  \\ 
\hline
1 & 2 & 0.403030 \\ 
\hline
2 & 3 & 0.649140 \\ 
\hline
3 & 4 & 0.756450\\ 
\hline
4 & 5 & 0.814850 \\ 
\hline
\end{tabular}
\caption{$R_{4,m,N}$ is the smallest root of equation (\ref{eq:R4mk}) in $(0,1)$.}
\label{tab3.4}
\end{minipage}%
\hfill
\begin{minipage}[b]{0.55\textwidth}
\centering
\begin{figure}[H]
\centering
\includegraphics[width=\textwidth]{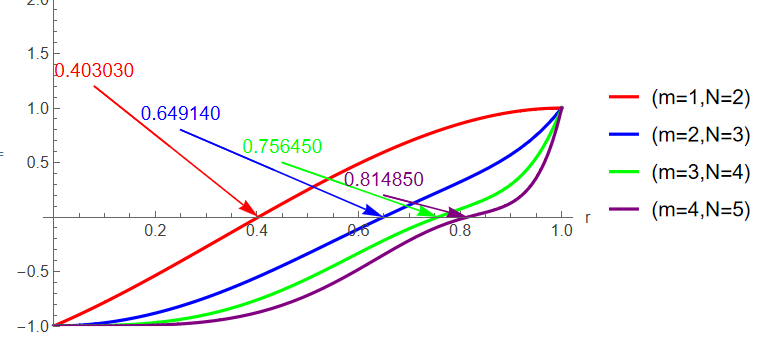}
\caption{The graphs exhibit the locations of the roots $R_{4,m,N}$ in $(0,1)$ for different value of $m,N$.}
\label{f41}
\end{figure}
\end{minipage}

\end{table}

\section*{Declarations}

\subsection*{Funding}
Not applicable.

\subsection*{Data Availability Statement}
Not applicable.

\subsection*{Conflict of Interest}
The authors declare that there is no conflict of interest.

\subsection*{Clinical Trial Number} not applicable.
\subsection*{Author Contribution} All authors contributed equally to this work

\end{document}